\theoremstyle{definition}
\theoremstyle{plain}
\newtheorem{thm}{Theorem}[section]
\newtheorem{remark}{Remark}[section]
\newtheorem{lemma}[thm]{Lemma}
\newtheorem{prop}[thm]{Proposition}
\title{Optimal dividend payments under a time of ruin constraint: Exponential claims}
\author{Camilo Hern\'andez\thanks{Department of Mathematics, Universidad de los Andes, Bogota, Colombia. Email address: {\tt mc.hernandez131@uniandes.edu.co}}\and Mauricio Junca\thanks{Department of Mathematics, Universidad de los Andes, Bogota, Colombia. Email address: {\tt mj.junca20@uniandes.edu.co}}}
\begin{document}

\maketitle

\begin{abstract}
We consider the classical optimal dividends problem under the Cram\'er-Lundberg model with exponential claim sizes subject to a constraint on the time of ruin. We introduce the dual problem and show that the complementary slackness conditions are satisfied, thus there is no duality gap. Therefore the optimal value function can be obtained as the point-wise infimum of auxiliary value functions indexed by Lagrange multipliers. We also present a series of numerical examples.
\end{abstract}

\section{Introduction}\label{Int}

One of the most studied models in actuarial science to describe the reserves process of an insurance company is the Cram\'er-Lundberg model. In this model the company faces claims whose arrivals follow a compound Poisson process and a constant premium is paid by the insured clients.

After the model was introduced, the probability of ruin of such a portfolio was among the principal interests in this field, see \cite{AsmuAlbr}. Nowadays, results about minimizing ruin probability considering reinsurance and investment in risky assets are proved by \cite{schmidli2002}. In \cite{Schmidli}, similar results for a discrete time version of the model and a diffusion approximation are shown. However, a process that does not end in ruin in a model exceeds every finite level, this is, the company lives an infinite period of time and accumulates an infinite amount of money, which is quite unrealistic in practice.

This idea motivated the study of the performance, instead of the safety aspect, of such portfolio. In 1957, Bruno de Finetti was interested in finding a way of paying out dividends in order to optimise the expected present value of the total income of the shareholders from time zero until ruin. This problem is commonly referred as de Finetti's problem, see \cite{Definetti}. As a result, researchers addressed the optimality aspect under more general and realistic assumptions which has turned out to be an abundant and ambitious field of research. See \cite{Schmidli} for results on this problem in the Cram\'er-Lundberg model and its diffusion approximation, and \cite{azcuemuler2005} when reinsurance is also considered.

Under exponential claims, see \cite{Schmidli}, the optimal dividends payment strategy to de Finetti's problem is known to be a \emph{barrier strategy}, this is, there exists a value $b$ such that the optimal strategy is to let $D_t=[ \bar{X}_t-b ]\vee 0$ where $\bar{X_t}$ denotes the maximum of the reserves process $X$ up to time $t$. However, the solution is not necessarily of this type in the general set up. In \cite{azcuemuler2005} the authors provided an example of claims distribution for which no barrier strategy is optimal.

However, there exists a trade-off between stability and profitability. Minimizing ruin probability means no dividends payment and profits tend to $0$. On the contrary, maximizing the dividends leads to a situation for which ruin is certain regardless of the initial capital, see \cite{buhlmann70}.

The idea of this work is to study a way to link two key concepts in the optimal dividends payment theory for the Cram\'er-Lundberg model: the profits and the time of ruin derived from a dividends payment strategy. Approaches in this direction have been already considered, see \cite{Hipp03} for a solution to the optimal dividends payment problem under a ruin constraint in discrete time and state setting. In \cite{Jostein03} the author addressed this matter introducing the concept of \emph{solvency constraints} and studied how they affected the optimal level of the barrier. In \cite{ThonAlbr} the authors consider the classical and diffusion approximation models and introduced a penalization on the time of ruin on the objective function, however an actual restriction on the time of ruin was not stated in this work. Finally, the model introduced in \cite{Grandits} took this link into account but in a different setting. More specifically, \cite{Grandits} proposed an iterative scheme to solve the problem of maximize the expected discounted consumption of an investor in finite time plus a penalisation on the level of the reserves at ruin for a given upper bound for the ruin probability.

This paper is organized as follows: Section \ref{problem} is dedicated to the formulation of the problem under consideration and to rewrite it using duality theory. In the following sections, under the exponential claim sizes assumption, we present the main results of this paper. We first focus on solving the dual problem in Section \ref{dual} and then show the absence of duality gap for this problem in Section \ref{gap}. This paper's contribution relies on both the solution of the optimal dividend payment problem under a constrain on the time of ruin, and the tools developed  in order to prove the duality gap of this problem to be zero. Section \ref{numerics} is dedicated to the presentation of numerical examples that illustrate different scenarios of the solution. In the last section we give conclusions of this study and present directions in which this work can be continued.

\section{Problem formulation}\label{problem}

In this paper we consider the Cram\'er-Lundberg risk model, in which the surplus follows the process:
\begin{equation*}
X_t= x_0 + ct - \sum_{i=1}^{N_t}Y_i,
\end{equation*}
where $N=(N_t)_{t\ge0}$ represents a homogeneous Poisson process with rate $\lambda>0$, modeling the claim occurrences. $\{ Y_i \}$ models the sequence of claim amounts, $\{ Y_i \} \overset{iid}{\sim} G(y)$, with $G(.)$ is a continuous distribution function on $[0,\infty)$. $\{Y_i\}$ is assumed to be independent of the claim occurrences process $N$. The deterministic components are the premium rate $c>0$ and the initial capital $x_0$. All of the above are defined in the same filtered probability space $(\Omega,\mathcal{F},(\mathcal{F}_t)_{t\geq0},\mathbb{P})$, with  $(\mathcal{F}_t)_{t\geq0}$ the filtration generated by the process $X$.

The insurance company is allowed to pay dividends which are model by the process $D=(D_t)_{t\geq0}$ representing the cumulative payments up to time $t$. A dividends process is called admissible if it is a non negative, non decreasing c\`adl\`ag process adapted to the filtration $(\mathcal{F}_t)_{t\geq0}$. Therefore, the surplus process under dividends process $D$ reads as
\begin{equation}\label{surplus}
X_t^D= x_0 + ct - \sum_{i=1}^{N_t}Y_i-D_t.
\end{equation}

Let $\tau^D$ denotes the time of ruin under dividends process $D$, i.e., $\tau^D=\inf\{t\geq0: X_t^D<0\}$. We require the dividends process not to lead to ruin, i.e., $D_{t-}-D_t\leq X_t^D$ for all $t$ and $D_t= D_{\tau^D}$ for $t\geq \tau^D$, so no dividends are paid after ruin. We call $\Theta$ the set of such processes. 

The company wants to maximise the expected value of the discounted flow of dividends payment along time, that is, to maximise $$\mathcal{V}^D(x_0):=\mathbb{E}_{x_0}\Big[ \int_0^{\tau^D}e^{-\delta t}d D_t\Big],$$ 
where the lifespan of the company will be determined by its ruin, see \cite{ThonAlbr}. Also, $\delta$ is the discount factor.

Finally, we add a restriction on the dividends process $D$ which we model by the equation:
\begin{equation}\label{Rest}
\mathbb{E}_{x_0} \Bigg[\int_0^{\tau^D}e^{-\delta s}ds\Bigg]\geq \int_0^T e^{-\delta s}ds \quad T\geq0 \text{ fixed.}
\end{equation}

The motivation behind such a constraint is that it imposes a restriction on the time of ruin. For simplicity, let's denote the right hand side of the restriction by $K_T$, i.e., $K_T:=\int_0^T e^{-\delta s}ds$. Note that $K_T \in [0,\frac{1}{\delta})$ and that the greater $K_T$ the greater $\tau^D$ must be. Evidently there are other restrictions that capture this effect in a more direct way, e.g., $\mathbb{E}[\tau^D] \geq K$. However there is no, to the best of our knowledge, a satisfactory technique to introduce such type of constraint in the model; additionally, as it will be clear in the following sections, the chosen functional form of the constraint fits in with the model in a smooth way. 

Combining all the above components we state the problem we aim to solve:
\begin{align*}\label{P1}
\tag{P1}
V(x_0):=\underset{D\in \Theta}\sup\quad \mathcal{V}^D(x_0) \quad \text{s.t.} \quad \mathbb{E}_{x_0} \Bigg[\int_0^{\tau^D}e^{-\delta s}ds\Bigg]\geq K_T \quad T \text{ fixed.}
\end{align*}

In order to solve this problem we use Lagrange multipliers to reformulate our problem. We first define the following for $ \Lambda \geq 0$ \begin{equation}\label{lagrangian}
\mathcal{V}_{\Lambda}^{D}(x_0):=\mathbb{E}_{x_0}\Bigg[ \int_0^{\tau^D}e^{-\delta t}d D_t +\Lambda \int_0^{\tau^D}e^{-\delta s}ds\Bigg]- \Lambda K_T.
\end{equation}

The following remark clears out the strategy we will follow in the remainder of the paper.
\begin{remark}
\begin{itemize}
\item Note that \eqref{P1} is equivalent to $\underset{D\in \Theta}\sup\,\, \underset{\Lambda\geq 0}\inf\,\,\mathcal{V}_{\Lambda}^{D}(x_0)$ since
$$\underset{\Lambda\geq 0}\inf\,\,\mathcal{V}_{\Lambda}^{D}(x_0)=\begin{cases} \mathcal{V}^D(x_0) &\mbox{if }\mathbb{E}_{x_0} \Bigg[\int_0^{\tau^D}e^{-\delta s}ds\Bigg]\geq K_T \\ 
-\infty & \mbox{otherwise }. \end{cases} $$
\item The dual problem of \eqref{P1}, is defined as 
\begin{equation}\label{D}
\tag{D}
\underset{\Lambda\geq 0}\inf\,\,\underset{D\in \Theta}\sup\,\, \mathcal{V}_{\Lambda}^{D}(x_0),
\end{equation}
which is always an upper bound for the primal \eqref{P1}.
\end{itemize}

The main goal is to prove that $\underset{D\in \Theta}\sup\,\, \underset{\Lambda\geq 0}\inf\,\,\mathcal{V}_{\Lambda}^{D}(x_0)= \underset{\Lambda\geq 0}\inf\,\,\underset{D\in \Theta}\sup\,\, \mathcal{V}_{\Lambda}^{D}(x_0)$.
\end{remark}

To solve \eqref{D} we note the last term of \eqref{lagrangian} does not depend on $D$ and is linear on $\Lambda$, therefore, we can focus on the first term on the right hand side of this equation and solve for fixed $\Lambda\geq0$
\begin{equation}\label{P2}
\tag{P2}
V_\Lambda(x_0):=\underset{D\in \Theta}\sup\,\, \mathcal{V}_{\Lambda}^{D}(x_0).
\end{equation}

For this problem, it is known that its solution must satisfy the following HJB equation, see Proposition 11 in \cite{ThonAlbr}

\begin{equation}\label{HJB}
\max \{\Lambda + cV'(x)+ \lambda \int_0^xV(x-y)dG(y)-(\lambda+\delta)V(x),1-V'(x)\}=0.
\end{equation}

As a first approach to this problem, we restrict ourselves assuming an exponential distribution for the claim sizes. In this scenario, we succeed in solving \eqref{P1} via \eqref{D} proving there is no duality gap. However, we have not yet approached the general problem for an arbitrary claim sizes distribution.

From now on we assume $\{Y_i\}\overset{iid}{\sim} Exp(\alpha)$. As a result \eqref{HJB} converts into
\begin{equation}\label{HJBexp}
\max \{\Lambda + cV'(x)+ \lambda \int_0^xV(x-y)\alpha e^{-\alpha y}dy-(\lambda+\delta)V(x),1-V'(x)\}=0,
\end{equation}

\section{Solution of \eqref{P2}}\label{dual}

For this problem \cite{ThonAlbr} proved that the optimal strategy corresponds to a barrier strategy and showed that the solution of \eqref{P2} is given by

\begin{thm}[Lemma 10 in \cite{ThonAlbr}]\label{Val}
Let $\Lambda\geq0$. Then, if $\Lambda\leq\frac{(\lambda+\delta)^2}{\alpha\lambda}-c$ the value function of \eqref{P2}  is given by
\[V_{\Lambda}(x)=x+ \frac{c+ \Lambda}{\lambda+\delta}+\frac{\Lambda}{\delta}(e^{-\delta T}-1).\]
Otherwise, there exists unique $b_\Lambda>0$ such that
\begin{equation}
V_{\Lambda}(x)=\begin{cases}
x-b_\Lambda+\frac{\alpha c-\lambda-\delta}{\alpha \delta}+\frac{\Lambda}{\delta}e^{-\delta T} &\text{if}\quad x\geq b_\Lambda \\
 C_1e^{r_1x}+C_2e^{r_2x}+\frac{\Lambda}{\delta}e^{-\delta T}  &\text{if}\quad x\leq b_\Lambda
\end{cases},
\end{equation}
Where $r_1, r_2$ are roots of the polynomial
\begin{equation}\label{poli}
 p(R)=c R^2+ (\alpha c -(\lambda+\delta)) R - \alpha \delta.
\end{equation}
$C_1, C_2$ are given by
\begin{align*}\label{const0}
C_2=-\frac{\alpha+r_2}{\alpha}\Bigg[\frac{\alpha}{\alpha+r_1}C_1+\frac{\Lambda}{\delta}\Bigg],\\
C_1=\frac{(\alpha+r_1)(\alpha\delta+e^{r_2b}\Lambda r_2(\alpha+r_2))}{\alpha\delta(e^{r_1b}r_1(\alpha+r_1)-e^{r_2b}r_2(\alpha+r_2))}.
\end{align*}
and the value of the optimal barrier level $b_\Lambda$ is given by the expression 
\begin{equation}\label{barrera}
(r_2-r_1)(\alpha+r_1)(\alpha+r_2)\Lambda =-r_1 e^{-r_2b}(r_1(\lambda+\delta)+ \alpha \delta)+r_2e^{-r_1b}(r_2(\lambda+\delta)+\alpha\delta).
\end{equation}
\end{thm}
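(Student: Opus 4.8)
The plan is to produce a classical solution of the HJB equation \eqref{HJBexp} with the structure dictated by the control problem, and then to identify it with $V_\Lambda$ by a verification argument. Recall from \cite{ThonAlbr} (Proposition~11 there) that $V_\Lambda$ is strictly increasing and that band strategies are optimal for \eqref{P2}. A preliminary observation is that the band set is a singleton: at any band point $b$ one must have $V_\Lambda(b)=\tfrac{\alpha(c+\Lambda)-\lambda-\delta}{\alpha\delta}$ --- this follows from the smooth-fit identity $v'(b)=1$ for the auxiliary function $v(x)=\tfrac{c+\Lambda}{\lambda+\delta}+\tfrac{\lambda}{\lambda+\delta}\int_0^x V_\Lambda(x-y)\alpha e^{-\alpha y}\,dy$ together with the value match $v(b)=V_\Lambda(b)$ --- and since $V_\Lambda$ is strictly increasing there is at most one such $b$. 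Hence the optimal policy is a barrier strategy at some level $b_\Lambda\ge 0$. It is convenient to work with $W_\Lambda:=V_\Lambda+\Lambda K_T$, i.e.\ the value of \eqref{P2} without the constant term $-\Lambda K_T$, which solves \eqref{HJBexp}; the stated formulas for $V_\Lambda$ then follow by subtracting $\Lambda K_T=\tfrac{\Lambda}{\delta}(1-e^{-\delta T})$.

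On the continuation region $[0,b_\Lambda)$ the first term of the maximum in \eqref{HJBexp} vanishes, yielding an integro-differential equation; differentiating once and using the exponential density $\alpha e^{-\alpha y}$ to remove the convolution reduces it to a linear, constant-coefficient, second-order ODE whose characteristic polynomial is \eqref{poli}. Since $p(0)=-\alpha\delta<0$, the roots $r_1,r_2$ are real with opposite signs and the general solution is $W_\Lambda(x)=\tfrac{\Lambda}{\delta}+C_1 e^{r_1 x}+C_2 e^{r_2 x}$. On $[b_\Lambda,\infty)$ the excess over $b_\Lambda$ is paid out at once, so $W_\Lambda'\equiv 1$ there and $W_\Lambda(x)=x-b_\Lambda+W_\Lambda(b_\Lambda)$ with $W_\Lambda(b_\Lambda)=\tfrac{\alpha(c+\Lambda)-\lambda-\delta}{\alpha\delta}$. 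Imposing at $b_\Lambda$ the smooth fit $W_\Lambda'(b_\Lambda)=1$ and continuity of $W_\Lambda$ determines $C_1,C_2$ in closed form, and simplifying the continuity relation by means of $p(r_1)=p(r_2)=0$ produces the transcendental equation \eqref{barrera} for $b_\Lambda$.

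It remains to separate the two cases. Substituting the linear candidate $W_\Lambda(x)=x+\tfrac{c+\Lambda}{\lambda+\delta}$ into the first term of the maximum in \eqref{HJBexp} and integrating the exponential kernel, that term collapses to $-\delta x+(1-e^{-\alpha x})\tfrac{\lambda}{\lambda+\delta}(c+\Lambda-\tfrac{\lambda+\delta}{\alpha})$; it vanishes at $x=0$, its derivative --- which equals $\tfrac{\alpha\lambda(c+\Lambda)-(\lambda+\delta)^2}{\lambda+\delta}$ at $x=0$ and is monotone in $x$ --- is nonpositive throughout exactly when $\Lambda\le\tfrac{(\lambda+\delta)^2}{\alpha\lambda}-c$, and in that case the term itself is $\le 0$ for all $x\ge 0$. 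In that regime $b_\Lambda=0$, the function $W_\Lambda(x)=x+\tfrac{c+\Lambda}{\lambda+\delta}$ solves \eqref{HJBexp}, and subtracting $\Lambda K_T$ gives the first formula of the theorem. For $\Lambda>\tfrac{(\lambda+\delta)^2}{\alpha\lambda}-c$ the same term is strictly positive near $x=0$, which forces $b_\Lambda>0$.

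The main obstacle is the regime $b_\Lambda>0$: one must show that \eqref{barrera} has a \emph{unique} positive root and that the piecewise function it determines is a genuine $C^1$ solution of \eqref{HJBexp} --- concave, with $W_\Lambda'\ge 1$ on $[0,b_\Lambda]$, and with the first term of the maximum $\le 0$ on $[b_\Lambda,\infty)$. I would obtain uniqueness by analysing $v$, which satisfies the first-order relation $v'(x)=-\alpha v(x)+\alpha\tfrac{c+\Lambda}{\lambda+\delta}+\tfrac{\lambda\alpha}{\lambda+\delta}W_\Lambda(x)$ and whose band points are precisely the solutions of $v'=1$: showing $v'$ is strictly decreasing wherever $W_\Lambda'>1$ gives at most one solution, while an intermediate-value and limiting analysis of the right-hand side of \eqref{barrera} as $b$ ranges over $(0,\infty)$ gives existence under the stated bound on $\Lambda$. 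With the candidate in hand, a routine verification step --- apply Dynkin's formula to $e^{-\delta t}W_\Lambda(X_t^D)$ for an arbitrary $D\in\Theta$, bound the drift using the HJB inequality to obtain $W_\Lambda(x_0)\ge\mathcal{V}^D(x_0)+\Lambda\,\mathbb{E}_{x_0}[\int_0^{\tau^D}e^{-\delta s}\,ds]$, and check that the barrier strategy at $b_\Lambda$ turns all inequalities into equalities --- shows that $W_\Lambda$ equals $\sup_{D\in\Theta}\{\mathcal{V}^D(x_0)+\Lambda\,\mathbb{E}_{x_0}[\int_0^{\tau^D}e^{-\delta s}\,ds]\}$, and hence $V_\Lambda=W_\Lambda-\Lambda K_T$ has the stated form. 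The closed forms for $C_1,C_2$ and the algebraic reduction to \eqref{barrera} are routine.
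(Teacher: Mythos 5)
Your proposal is correct and takes essentially the same route as the paper, which itself quotes this result from Lemma 10 of the cited reference: your steps (band set reduced to a single barrier via the smooth-fit identity and monotonicity of $V_\Lambda$, reduction of the integro-differential equation to the ODE with characteristic polynomial \eqref{poli}, determination of $C_1,C_2$ and the barrier equation \eqref{barrera}, direct verification that the linear candidate solves \eqref{HJBexp} exactly when $\Lambda\leq\frac{(\lambda+\delta)^2}{\alpha\lambda}-c$, and a standard verification argument) reproduce the authors' intended derivation. No changes are needed.
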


Let us define the critical value $$\bar{\Lambda}=\frac{(\delta+\lambda)^2}{\alpha \lambda}-c.$$
\begin{remark}\label{critcrem}
Note that for $0\leq\Lambda\leq\bar{\Lambda}$ the optimal barrier strategy is $b=0$.
\end{remark}

The following proposition will be essential in the next section to prove the main result of this contribution.

\begin{prop}\label{blambda1to1}
\begin{enumerate}[(i)]
\item\label{caso1}  If $\bar{\Lambda}\geq 0$  for each $b> 0$, there exists a unique $\Lambda>\bar{\Lambda}$ such that $b$ is the optimal barrier dividend strategy for \eqref{P2} with $\Lambda$.
\item\label{caso2} If $\bar{\Lambda}< 0$, there exists $b_0>0$ such that for each $b\geq b_0$, there exists a unique $\Lambda\geq0$ such that $b$ is the optimal barrier dividend strategy for \eqref{P2} with $\Lambda$.
\end{enumerate}
\end{prop}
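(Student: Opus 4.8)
The plan is to treat the defining relation \eqref{barrera} as an implicit equation linking the Lagrange multiplier $\Lambda$ and the barrier level $b$, and to invert it. Rewrite \eqref{barrera} as $\Lambda = \Phi(b)$ where
\[
\Phi(b):=\frac{-r_1 e^{-r_2 b}\big(r_1(\lambda+\delta)+\alpha\delta\big)+r_2 e^{-r_1 b}\big(r_2(\lambda+\delta)+\alpha\delta\big)}{(r_2-r_1)(\alpha+r_1)(\alpha+r_2)},
\]
with $r_1,r_2$ the roots of $p$ in \eqref{poli}. The claim in both cases amounts to: $\Phi$ is continuous on $(0,\infty)$, strictly monotone (increasing), and its range covers the relevant interval of $\Lambda$'s — namely $(\bar\Lambda,\infty)$ in case \eqref{caso1}, and $[\Phi(b_0),\infty)\supseteq$ some $[0,\infty)$-tail in case \eqref{caso2}. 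Uniqueness of $\Lambda$ for given $b$ is then immediate from strict monotonicity; what needs care is existence, i.e. surjectivity onto the claimed range, together with the sign/ordering bookkeeping that distinguishes the two cases via the sign of $\bar\Lambda$.

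First I would record the elementary facts about the roots: since $p(R)=cR^2+(\alpha c-(\lambda+\delta))R-\alpha\delta$ has $p(0)=-\alpha\delta<0$, it has one positive root and one negative root; set $r_1>0>r_2$ (fixing the convention used implicitly in Theorem \ref{Val}). Also $p(-\alpha)=c\alpha^2-(\alpha c-(\lambda+\delta))\alpha-\alpha\delta=\alpha\lambda>0$, so $-\alpha<r_2<0<r_1$, giving $\alpha+r_2>0$ and $\alpha+r_1>0$; hence the denominator $(r_2-r_1)(\alpha+r_1)(\alpha+r_2)$ is negative. These sign facts make the monotonicity computation below clean.

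Next I would analyze $\Phi$ on $(0,\infty)$. Differentiating, $\Phi'(b)$ has numerator $r_1 r_2 e^{-r_2 b}\big(r_1(\lambda+\delta)+\alpha\delta\big)-r_1 r_2 e^{-r_1 b}\big(r_2(\lambda+\delta)+\alpha\delta\big)$ over the same negative denominator; using $r_1 r_2=-\alpha\delta/c<0$ and the relation $r_i(\lambda+\delta)+\alpha\delta$ (whose sign I would pin down from $p(r_i)=0$, which gives $r_i(\lambda+\delta)+\alpha\delta = cr_i^2+\alpha c r_i = c r_i(r_i+\alpha)$, hence positive for $r_1$ and negative for $r_2$), one gets a definite sign for $\Phi'$ on all of $(0,\infty)$; I expect $\Phi'>0$, so $\Phi$ is strictly increasing. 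Then I would compute the two boundary values: as $b\to 0^+$, $\Phi(b)\to\Phi(0)$, and a short computation using $r_1+r_2=(\lambda+\delta-\alpha c)/c$, $r_1 r_2=-\alpha\delta/c$ should collapse $\Phi(0)$ exactly to $\bar\Lambda=(\lambda+\delta)^2/(\alpha\lambda)-c$ (this is the natural consistency check with Remark \ref{critcrem}, where $b=0$ corresponds precisely to $\Lambda=\bar\Lambda$). As $b\to\infty$, the $e^{-r_2 b}$ term (with $-r_2>0$) dominates and, since its coefficient $-r_1(r_1(\lambda+\delta)+\alpha\delta)/[(r_2-r_1)(\alpha+r_1)(\alpha+r_2)]$ is positive, $\Phi(b)\to+\infty$. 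Therefore $\Phi:(0,\infty)\to(\bar\Lambda,\infty)$ is a strictly increasing bijection. In case \eqref{caso1}, $\bar\Lambda\ge 0$, so for each $b>0$ the unique solution $\Lambda=\Phi(b)$ satisfies $\Lambda>\bar\Lambda\ge 0$, and by Theorem \ref{Val} (with Remark \ref{critcrem}) this $\Lambda$ is exactly the one whose optimal barrier is $b$. In case \eqref{caso2}, $\bar\Lambda<0$; set $b_0:=\Phi^{-1}(0)>0$, which is well-defined and positive because $0\in(\bar\Lambda,\infty)$ and $\Phi(0^+)=\bar\Lambda<0$; then for $b\ge b_0$ we have $\Lambda=\Phi(b)\ge 0$, again unique and again the admissible multiplier by Theorem \ref{Val}.

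The main obstacle is the sign analysis: establishing, uniformly in $b$, that $\Phi'$ has a constant sign, and verifying the boundary identity $\Phi(0)=\bar\Lambda$ and the divergence $\Phi(\infty)=+\infty$. All three reduce to algebra in $r_1,r_2$ via the two Vieta relations and the identity $r_i(\lambda+\delta)+\alpha\delta=cr_i(r_i+\alpha)$, but one must be careful that the $b\le\bar\Lambda$ regime of Theorem \ref{Val} (constant optimal strategy $b=0$) is correctly excluded so that the map $b\mapsto\Lambda$ is genuinely a bijection onto the stated domain rather than onto a proper subset; keeping the root-convention $r_1>0>r_2$ consistent with the $C_1,C_2$ formulas in Theorem \ref{Val} is the one place where an index swap would silently break the argument.
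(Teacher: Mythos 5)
Your proposal is correct and follows essentially the same route as the paper: both invert equation \eqref{barrera} as a map between $b$ and $\Lambda$, use the root facts $-\alpha<r_2<0<r_1$ to fix the sign of the derivative and obtain strict monotonicity, check $\Lambda(0)=\bar\Lambda$, and use $\Lambda\to\infty$ as $b\to\infty$ to cover the two cases. Your extra details (the identity $r_i(\lambda+\delta)+\alpha\delta=cr_i(r_i+\alpha)$ and the Vieta computation of $\Lambda(0)$) simply make explicit steps the paper asserts.
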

\begin{proof}
Note that equation \eqref{barrera} defines a map $\Lambda:[0,\infty)\rightarrow \mathbb{R}$. Taking derivative with respect to $b$ we obtain
\begin{equation}\label{dbdl}
\frac{d \Lambda(b)}{db}=\frac{r_1 r_2 [e^{-r_2b}(r_1(\lambda+\delta)+ \alpha \delta)- e^{-r_1b}(r_2(\lambda+\delta)+\alpha\delta)]}{(r_2-r_1)(\alpha+r_1)(\alpha+r_2)}
\end{equation}
It can be easily shown that both roots of the characteristic polynomial are real and non zero, furthermore, one of them is positive and the other is negative. The first follows since $[\alpha c -(\lambda+\delta)]^2+4c\alpha \delta >0$ and the second since $-[\alpha c -(\lambda + \delta)] +\sqrt{ [\alpha c -(\lambda+\delta)]^2+4c\alpha \delta} >0$, $-[\alpha c -(\lambda + \delta)] -\sqrt{ [\alpha c -(\lambda+\delta)]^2+4c\alpha \delta} <0$. Now, for the negative root, let's say $r_2$, we have $\alpha + r_2>0$ (see Lemma \ref{alphaandroot}). This leaves us with the case $r_1>0>r_2$, in which (\ref{dbdl}) is strictly positive as both the numerator and denominator are negative and therefore the map is injective. Furthermore, using the expressions for $r_1$ and $r_2$ in \eqref{barrera} we can show that
\begin{equation}\label{lambdacrit}
\Lambda(0)=\frac{(\lambda+\delta)^2-\alpha \lambda c}{\alpha\lambda}=\bar{\Lambda}.
\end{equation}
Hence, if  $\Lambda(0)<0$ and there exists $b_0>0$ (and solution of \eqref{bar0}) that satisfies \eqref{caso2} since $\Lambda\rightarrow\infty$ when $b\rightarrow\infty$. \eqref{caso1} follows from \eqref{lambdacrit}.
\end{proof}

Figure \ref{lambdab} shows values of $b_\Lambda$ derived from equation \eqref{barrera} for $\lambda=1,\, c=1.3,\, Y_i\sim Exp(1),\, $and$\,  \delta=0.1$. Note that this values fall in the second case of Proposition \ref{blambda1to1}.

\begin{figure}[h!]
\centering
\includegraphics[scale=.9]{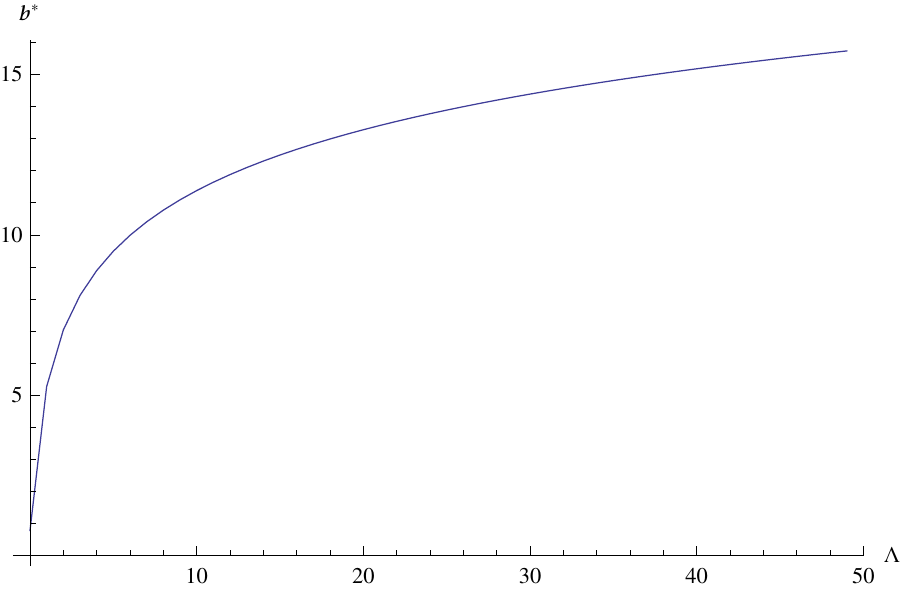}
\caption{Optimal barrier}\label{lambdab}
\end{figure}

\section{Solution of \eqref{P1}}\label{gap}

Let $X_t^{b}$ be the surplus process under dividend barrier strategy with level $b$ denoted by $D^b$. Let $\tau^b$ be the time of ruin using such strategy, i.e., $\tau^b:=\inf\{t:X_t^{b}<0\}$. In order to find out the solution to \eqref{P1} we will need the following proposition. 

\begin{prop}\label{optimalpair}
For each $x_0\geq0$ there exists $H_{x_0}\geq0$ such that if $0\leq T<H_{x_0}$ there exists $(\Lambda^*,b^*)$ that satisfies:
\begin{enumerate}[(i)]
\item $\Lambda^*\geq0$ and $b^*$ is the optimal barrier for \eqref{P2} with $\Lambda^*$ and initial value $x_0$,
\item\label{cond2} $\mathbb{E}_{x_0}\left[\int_0^{\tau^{b^*}}e^{-\delta s}ds\right]\geq K_T$ and
\item\label{cond3} $\Lambda^*\left(\mathbb{E}_{x_0}\left[\int_0^{\tau^{b^*}}e^{-\delta s}ds\right] -K_T\right)=0$.
\end{enumerate}
\end{prop}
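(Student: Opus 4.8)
The plan is to parametrize candidate barrier strategies by the Lagrange multiplier through Proposition~\ref{blambda1to1}, and to pick the multiplier so that the complementary slackness conditions hold automatically. The key auxiliary quantity is the feasibility functional
\[
W^{b}(x_0):=\mathbb{E}_{x_0}\!\left[\int_0^{\tau^{b}}e^{-\delta s}\,ds\right]=\frac{1}{\delta}\Big(1-\mathbb{E}_{x_0}\big[e^{-\delta\tau^{b}}\big]\Big),\qquad b\ge 0 ,
\]
and I will write $b_0\ge 0$ for the optimal barrier attached to $\Lambda=0$, i.e.\ $b_0=0$ when $\bar\Lambda\ge 0$ (Remark~\ref{critcrem}) and $b_0>0$ the de Finetti barrier solving \eqref{bar0} when $\bar\Lambda<0$.

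The first and technically hardest step is to pin down the behaviour of $b\mapsto W^{b}(x_0)$: I want to show it is continuous and non-decreasing on $[0,\infty)$, that $W^{0}(x_0)=\tfrac{1}{\lambda+\delta}$ (ruin at the first claim), and that $W^{b}(x_0)\uparrow W^{\infty}(x_0):=\mathbb{E}_{x_0}\big[\int_0^{\tau^{\infty}}e^{-\delta s}ds\big]$ as $b\to\infty$, where $\tau^{\infty}$ is the ruin time of the uncontrolled surplus; here $0<W^{\infty}(x_0)<1/\delta$, the strict upper bound holding because a single large early claim forces $\mathbb{P}_{x_0}(\tau^{\infty}<\infty)>0$. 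Monotonicity I would get from a pathwise coupling: realize all barrier-controlled surpluses from the same claim epochs and sizes, so $X^{b_1}_t\le X^{b_2}_t$, hence $\tau^{b_1}\le\tau^{b_2}$ for $b_1\le b_2$, with $\tau^{b}\uparrow\tau^{\infty}$, and then invoke monotone convergence for the limit. Continuity in $b$ is cleanest from the explicit formula for $x\mapsto\mathbb{E}_x[e^{-\delta\tau^{b}}]$ on $[0,b]$, obtained by solving its integro-differential equation with reflecting condition at $b$, exactly as $V_\Lambda$ was computed in Section~\ref{dual}; alternatively one pushes the coupling plus dominated convergence. I expect essentially all of the work to sit in this monotonicity/continuity package; the remainder is soft.

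Granting this, define $H_{x_0}$ by $K_{H_{x_0}}=W^{\infty}(x_0)$; since $T\mapsto K_T$ is a continuous strictly increasing bijection of $[0,\infty)$ onto $[0,1/\delta)$ and $0<W^{0}(x_0)\le W^{\infty}(x_0)<1/\delta$, this gives $H_{x_0}\in(0,\infty)$, and $0\le T<H_{x_0}$ is equivalent to $K_T<W^{\infty}(x_0)$. Fix such a $T$ and split on complementary slackness. If $K_T\le W^{b_0}(x_0)$, take $(\Lambda^{*},b^{*})=(0,b_0)$: then (i) holds by Remark~\ref{critcrem} / Theorem~\ref{Val} at $\Lambda=0$, (ii) is the case hypothesis, and (iii) is immediate since $\Lambda^{*}=0$. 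If instead $W^{b_0}(x_0)<K_T$, then $W^{b_0}(x_0)<K_T<W^{\infty}(x_0)=\lim_{b\to\infty}W^{b}(x_0)$, so by continuity and the intermediate value theorem there is $b^{*}>b_0$ with $W^{b^{*}}(x_0)=K_T$; since $b^{*}>b_0\ge 0$, Proposition~\ref{blambda1to1} (its case \ref{caso1} if $\bar\Lambda\ge 0$, its case \ref{caso2} if $\bar\Lambda<0$, using $b^{*}\ge b_0$) supplies a unique $\Lambda^{*}>0$ for which $b^{*}$ is the optimal barrier for \eqref{P2}, so (i) holds by construction, (ii) holds with equality, and (iii) holds because $W^{b^{*}}(x_0)-K_T=0$. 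This settles both cases.
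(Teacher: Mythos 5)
Your proposal is correct and follows essentially the same route as the paper: your $W^b$ is exactly the paper's $\varPsi_b$ (obtained there as the explicit solution of \eqref{A1} and identified probabilistically via Dynkin's formula, with monotonicity in $b$ from Lemma \ref{Phiincreasinb}), your $W^\infty$ is $\hat{\varPsi}$, your $H_{x_0}$ agrees with the paper's via $K_{H_{x_0}}=\hat{\varPsi}(x_0)$, and the case split (unconstrained barrier $b_0$ already feasible, versus intermediate value theorem for $b^*$ and then Proposition \ref{blambda1to1} for $\Lambda^*$) is the same. The only deviation is that you get monotonicity and the $b\to\infty$ limit by pathwise coupling instead of the explicit formulas, which suffices here, though the paper's closed forms for $\hat{\varPsi}$ and $H_{x_0}$ are reused later in Theorem \ref{strongduality} and Lemma \ref{limit}.
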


\begin{proof}
Let $x_0\geq0$ fixed. Consider the following IDE problem:
\begin{align*}\label{A1}
\tag{A1}
& \mathcal {A}(\varPsi_b)(x)-\delta \varPsi_b(x)=-1\\
&\varPsi_b'(b)=0\\
&\varPsi_b \in C^1[0,b]
\end{align*}
where $\mathcal{A}(f)(x)$ is $cf'_b(x)+ \lambda \int_0^xf(x-y)\alpha e^{-\alpha y}dy-\lambda f(x)$, the infinitesimal generator of the surplus process $X_t$. It can be shown that for $0\leq x\leq b$
$$\varPsi_b(x)= \frac{1}{\delta} + C_1e^{r_1x}+C_2e^{r_2x}$$
with
$$C_1=\frac{(\alpha+r_1)(\alpha+r_2)r_2 e^{r_2b} }{\alpha\delta[r_1e^{r_1b}(\alpha+r_1)-r_2 e^{r_2b}(\alpha+r_2)]}$$
and
$$C_2= -\frac{(\alpha+r_2)^2 r_2 e^{r_2b}}{\alpha\delta(e^{r_1b}r_1(\alpha+r_1)-e^{r_2b}r_2(\alpha+r_2))}- \frac{(\alpha+r_2)}{\alpha\delta}$$
is solution of \eqref{A1}, where $r_1,\, r_2$ denote the roots of \eqref{poli}. Extend $\varPsi_b$ to $\mathbb{R}$ so that $\varPsi_b(x)=0,\, x< 0$ and $\varPsi_b(x)=\varPsi_b(b), x\geq b$. Using Dynkin's formula and the Optional Stopping Theorem we obtain that for $0\leq x\leq b$
\begin{align*}
\mathbb{E}_{x}[e^{-\delta \tau^b} \varPsi_b(X_{\tau^b}^{b})]&=\varPsi_b(x)+ \mathbb{E}_{x}\bigg[\int_0^{\tau^b} e^{-\delta s}[\mathcal {A}(\varPsi_b(X_{s}^{b}))-\delta \varPsi_b(X_s^{b})]ds \\
&+ \sum_{0\le s\le t, \bigtriangleup D^b \neq 0}e^{-\delta s}\big[ \varPsi_b(X_s^{b})- \varPsi_b(X_{s-}^{b})\big] + \int_0^{\tau^b}e^{-\delta s} \varPsi'_b(X_s^{b})d \bar{D}_s^b \bigg]\\
&=\varPsi_b(x)+ \mathbb{E}_{x}\bigg[\int_0^{\tau^b} e^{-\delta s}[\mathcal {A}(\varPsi_b(X_{s}))-\delta \varPsi_b(X_s)]ds \bigg]\\ 
&+\mathbb{E}_{x}\bigg[\int_0^{\tau^b} e^{-\delta s}c\varPsi'_b(b) \mathbf{1}_{\{X_s=b\}} ds\bigg],
\end{align*}
where $\bar{D}^b$ denotes the continuous part of the control $D^b$. For the last equality we used that the continuous part of the control consists only of $c$ at the moment at which $X_s^{b}=b$ and that the control has no jumps. Therefore, for $\varPsi_b(x)$, the extended solution of \eqref{A1}, we get that 
$$\varPsi_b(x)=\mathbb{E}_{x}\left[\int_0^{\tau^{b}}e^{-\delta s}ds\right].$$
 Define, 
\[\hat{\varPsi}(x):=\lim_{b\to \infty}\varPsi_b(x)=\frac{1}{\delta}-\frac{\alpha+r_2}{\alpha \delta}e^{r_2 x}.\]
Let $H_{x_0}:= -\frac{1}{\delta}(\log(\frac{\alpha + r_2}{\alpha})+r_2x_0)$ and suppose $0\leq T<H_{x_0}$.  Recall $K_T=\frac{1-e^{-\delta T}}{\delta}$ and note that $K_{H_{x_0}}=\hat{\varPsi}(x_0)$, so $K_T<\hat{\varPsi}(x_0)$. Then we have the following to cases:
\begin{enumerate}
\item Suppose $\bar{\Lambda}\geq 0$. If $\varPsi_{0}(x_0)\geq K_T$ then \eqref{cond2} is satisfied and the barrier $b^*=0$ is optimal for \eqref{P2} with $\Lambda^*=0$ by Remark \ref{critcrem}. On the other hand, if $\varPsi_{0}(x_0)< K_T < \hat{\varPsi}(x_0) $ Lemma \ref{Phiincreasinb} guarantees the existence of a unique $b^*>0$ such that $\varPsi_{b^*}(x_0)=K_T$. In this later case, \eqref{caso1} of Proposition \ref{blambda1to1}  guarantees the existence of a unique $\Lambda^*$ for which $b^*$ is optimal for \eqref{P2} with $\Lambda^*$. In both cases we have \eqref{cond3}.
\item Suppose $\bar{\Lambda}<0$. If $\varPsi_{b_0}(x_0)\geq K_T$  then \eqref{cond2} is satisfied and the unconstrained problem satisfies the restriction. Therefore, $b^*=b_0$ is optimal for \eqref{P2} with $\Lambda^*=0$. If $\varPsi_{b_0}(x_0)< K_T < \hat{\varPsi}(x_0) $ just as before we know there exists a unique $b^*$ such that $\varPsi_{b^*}(x_0)=K_T$. By \eqref{caso2} of Proposition \ref{blambda1to1} there exists a unique $\Lambda^*$ for which $b^*$ is optimal for \eqref{P2} with $\Lambda^*$. In both cases we also have \eqref{cond3}. 
\end{enumerate}
\end{proof}

As a consequence we have the main theorem:
\begin{thm}\label{strongduality}Let $x_0\geq 0$, $T\geq0$ and $V(x_0)$ be the optimal solution to \eqref{P1}. Then
\begin{enumerate}[(i)]
\item\label{th1} $ V(x_0)\leq \underset{\Lambda\geq 0}\inf\,\,V_{\Lambda}(x_0) $ and
\item\label{th2} $\underset{\Lambda\geq 0}\inf\,\,V_{\Lambda}(x_0) \leq V(x_0)$.
\end{enumerate}
Therefore, $\underset{\Lambda\geq 0}\inf\,\,V_{\Lambda}(x_0)=V(x_0)$.
\end{thm}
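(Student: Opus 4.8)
The plan is to prove the two inequalities \eqref{th1} and \eqref{th2} separately. Inequality \eqref{th1} is plain weak duality and needs no structural input: fix $\Lambda\ge0$ and any $D\in\Theta$ feasible for \eqref{P1}, i.e.\ with $\mathbb{E}_{x_0}[\int_0^{\tau^D}e^{-\delta s}ds]\ge K_T$. Since $\Lambda\ge0$, the bracketed term in \eqref{lagrangian} is nonnegative, so $\mathcal{V}^D(x_0)\le\mathcal{V}_\Lambda^D(x_0)\le\sup_{D'\in\Theta}\mathcal{V}_\Lambda^{D'}(x_0)=V_\Lambda(x_0)$. Taking the supremum over feasible $D$ gives $V(x_0)\le V_\Lambda(x_0)$ for every $\Lambda\ge0$, and then the infimum over $\Lambda$ gives \eqref{th1}. (If \eqref{P1} is infeasible, $V(x_0)=-\infty$ and \eqref{th1} is trivial.)

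For \eqref{th2} I would first treat the substantive range $0\le T<H_{x_0}$, where Proposition \ref{optimalpair} does all the work. It furnishes a pair $(\Lambda^*,b^*)$ such that $b^*$ is the optimal barrier for \eqref{P2} at $\Lambda^*$, the barrier strategy $D^{b^*}$ is feasible for \eqref{P1} (part \eqref{cond2}), and complementary slackness $\Lambda^*(\mathbb{E}_{x_0}[\int_0^{\tau^{b^*}}e^{-\delta s}ds]-K_T)=0$ holds (part \eqref{cond3}). Since the supremum in \eqref{P2} is attained by the optimal barrier strategy (Theorem \ref{Val}), $V_{\Lambda^*}(x_0)=\mathcal{V}_{\Lambda^*}^{D^{b^*}}(x_0)$; expanding \eqref{lagrangian} and invoking \eqref{cond3} collapses this to $V_{\Lambda^*}(x_0)=\mathcal{V}^{D^{b^*}}(x_0)$. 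As $D^{b^*}$ is feasible, $\mathcal{V}^{D^{b^*}}(x_0)\le V(x_0)$, hence
\[\inf_{\Lambda\ge0}V_\Lambda(x_0)\ \le\ V_{\Lambda^*}(x_0)\ =\ \mathcal{V}^{D^{b^*}}(x_0)\ \le\ V(x_0),\]
which, combined with \eqref{th1}, gives the claimed equality on $0\le T<H_{x_0}$.

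It remains to dispatch $T\ge H_{x_0}$, which Proposition \ref{optimalpair} does not cover. Running the Dynkin/optional-stopping argument from its proof with $\hat\varPsi$ in place of $\varPsi_b$ (using $\hat\varPsi\ge0$, $\hat\varPsi$ nondecreasing, and $\mathcal{A}(\hat\varPsi)-\delta\hat\varPsi=-1$, the dividend jumps and the continuous part of the control each contributing a term of favourable sign) yields the a priori bound $\mathbb{E}_{x_0}[\int_0^{\tau^D}e^{-\delta s}ds]\le\hat\varPsi(x_0)=K_{H_{x_0}}$ for all $D\in\Theta$. Hence for $T>H_{x_0}$ the problem \eqref{P1} is infeasible, so $V(x_0)=-\infty$, while $V_\Lambda(x_0)\le V_0(x_0)+\Lambda(\hat\varPsi(x_0)-K_T)\to-\infty$ as $\Lambda\to\infty$, and both sides agree; for $T=H_{x_0}$ one checks $V(x_0)=0$ and uses $0\le V_\Lambda(x_0)\le\mathcal{V}^{D^{b_\Lambda}}(x_0)\to0$ (since $b_\Lambda\to\infty$ as $\Lambda\to\infty$) to get $\inf_\Lambda V_\Lambda(x_0)=0$. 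I expect the main obstacle to be not in this theorem itself — it is essentially a corollary of Proposition \ref{optimalpair}, the genuine work being the construction of the saddle pair and the verification of complementary slackness there — but rather in this degenerate regime $T\ge H_{x_0}$, where one must be careful that the dual infimum genuinely reaches $-\infty$ (resp.\ $0$); the a priori bounds $\mathcal{V}^D(x_0)\le V_0(x_0)$ and $\mathbb{E}_{x_0}[\int_0^{\tau^D}e^{-\delta s}ds]\le\hat\varPsi(x_0)$ are exactly what make this work.
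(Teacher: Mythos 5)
Your argument is correct, and for the substantive cases it coincides with the paper's: \eqref{th1} is exactly weak duality, and for $T<H_{x_0}$ you use Proposition \ref{optimalpair} together with complementary slackness in precisely the way the paper does. The genuine difference is in the degenerate regime $T\geq H_{x_0}$. The paper handles $T=H_{x_0}$ by invoking Lemma \ref{limit} (the delicate computation that $\Lambda(\varPsi_{b_\Lambda}(x_0)-K_T)\to 0$) together with convexity of $\Lambda\mapsto V_\Lambda(x_0)$ to identify $\inf_\Lambda V_\Lambda(x_0)=0$, and handles $T>H_{x_0}$ by bounding only the barrier strategies $b_\Lambda$ (which suffices because $V_\Lambda$ is attained by a barrier). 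You instead establish the a priori bound $\mathbb{E}_{x_0}[\int_0^{\tau^D}e^{-\delta s}ds]\leq\hat{\varPsi}(x_0)$ for \emph{all} $D\in\Theta$ (via Dynkin with $\hat{\varPsi}$, or even more simply from the pathwise comparison $\tau^D\leq\tau^0$), which gives infeasibility for $T>H_{x_0}$ directly, and for $T=H_{x_0}$ you use the squeeze $0\leq V_\Lambda(x_0)\leq\mathcal{V}^{D^{b_\Lambda}}(x_0)\to0$, thereby bypassing Lemma \ref{limit} and the convexity step altogether --- a real simplification. One small point you leave implicit (as does the paper when it asserts $0\leq V(x_0)$): the lower bound $0\leq V_\Lambda(x_0)$ and the feasibility of \eqref{P1} at $T=H_{x_0}$ both rest on identifying $\hat{\varPsi}(x_0)=\lim_b\varPsi_b(x_0)$ with $\mathbb{E}_{x_0}[\int_0^{\tau^{D\equiv0}}e^{-\delta s}ds]$ (monotone convergence, since $\tau^b$ increases to the no-dividend ruin time), so that the do-nothing strategy has Lagrangian value exactly $0$; stating this explicitly would close the argument completely.
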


\begin{proof}
Fix $x_0\geq 0$. Condition \eqref{th1} is satisfied since $\underset{\Lambda\geq 0}\inf\,\,V_{\Lambda}(x_0)$ is the dual problem of \eqref{P1}. To verify condition \eqref{th2} we have the following cases:
\begin{enumerate}[(i)]
\item \underline{$T< H_{x_0}$}: By Proposition \ref{optimalpair} there is a pair $(\Lambda^*,b^*)$ such that
\begin{align*}
\underset{\Lambda\geq 0}\inf\,\,V_{\Lambda}(x_0)&\leq V_{\Lambda^*}(x_0)\\
&= \mathbb{E}_{x_0}\Bigg[ \int_0^{\tau^{b^*}}e^{-\delta t}d D^{b^*}_t + \Lambda^* \int_0^{\tau^{b^*}}e^{-\delta t} dt \Bigg] - \Lambda ^*K_T\\
&= \mathbb{E}_{x_0}\Bigg[ \int_0^{\tau^{b^*}}e^{-\delta t}d D^{b^*}_t\Bigg]\\
&\leq V(x_0),
\end{align*}
where the last inequality follows since the barrier strategy $b^*$ satisfies \eqref{Rest}.
\item \underline{$T=H_{x_0}$}: In this case $K_T= \hat{\varPsi}(x_0)$ and by Lemma \ref{limit} 
$$\Lambda\left(\mathbb{E}_{x_0}\left[\int_0^{\tau^{b_\Lambda}}e^{-\delta s}ds\right]- K_T\right)\rightarrow 0\quad \text{as} \quad \Lambda\rightarrow \infty.$$
Also $\mathbb{E}_{x_0}\left[ \int_0^{\tau^{b_{\Lambda}}}e^{-\delta t}d D^{b_\Lambda}_t\right]\rightarrow 0$  as $\Lambda\rightarrow \infty$ since $b_\Lambda\rightarrow\infty$.
Therefore, since $V_0(x_0)\geq0$ and $V_\Lambda(x_0)$ is convex in $\Lambda$ (it is the supremum of linear functions) we obtain that
$$\underset{\Lambda\geq 0}\inf\,\,V_{\Lambda}(x)= 0 \leq V(x_0).$$ 
\item \underline{$T> H_{x_0}$}: In this case $K_T> \hat{\varPsi}(x_0)$, therefore for all $b\geq0$ it holds that $\mathbb{E}_{x_0}\left[\int_0^{\tau^{b}}e^{-\delta s}ds\right] < \hat{\varPsi}(x_0)< K_T$. From this, one can deduce there exists $\epsilon>0$ such that $\Lambda\left(\mathbb{E}_{x_0}[\int_0^{\tau^{b_\Lambda}}e^{-\delta s}ds] -K_T\right)<-\Lambda \epsilon$. Letting $\Lambda \rightarrow\infty$ we obtain $\underset{\Lambda\geq 0}\inf\,\,V_{\Lambda}(x)=-\infty\leq V(x_0)$.
\end{enumerate}
\end{proof}

\section{Numerical examples}\label{numerics}

In this section we illustrate with more detail the cases that came up in the proof of Proposition \ref{optimalpair} and Theorem \ref{strongduality}. As presented in the previous section to obtain the optimal value function of \eqref{P1} we showed a pair $(b^*,\Lambda^*)$ that certified strong duality. To do so, we consider several cases depending on the initial value $x_0$ and $T$. We will continue to assume the following parameter values:  $ \lambda=1,\, c=1.3,\, Y_i\sim exp(1),\, $and$\,  \delta=0.1$. In each case we will show two graphs. Graphs on the left show $\varPsi_b(x)$ for different values of $b$ and graphs on the right show $V_\Lambda(x_0)$ for different values of $\Lambda$.

For the first case, choose $T$ and $x_0$ so that $K_T$ lies bellow $\varPsi_{b_0}(x_0)$. In this situation we know that the unconstrained solution satisfies the restriction. With such values the plot of $V_\Lambda(x_0)$ for different values of $\Lambda$ illustrate that the minimum is attained at $\Lambda^*=0$, see Figure \ref{figinactive}. The optimal solution is $V_0(x_0)$ and the optimal barrier is $b^*=b_0=0.8$.
\begin{figure}[h!]
   \begin{subfigure}[b]{.46\linewidth}
      \includegraphics[width=1\textwidth]{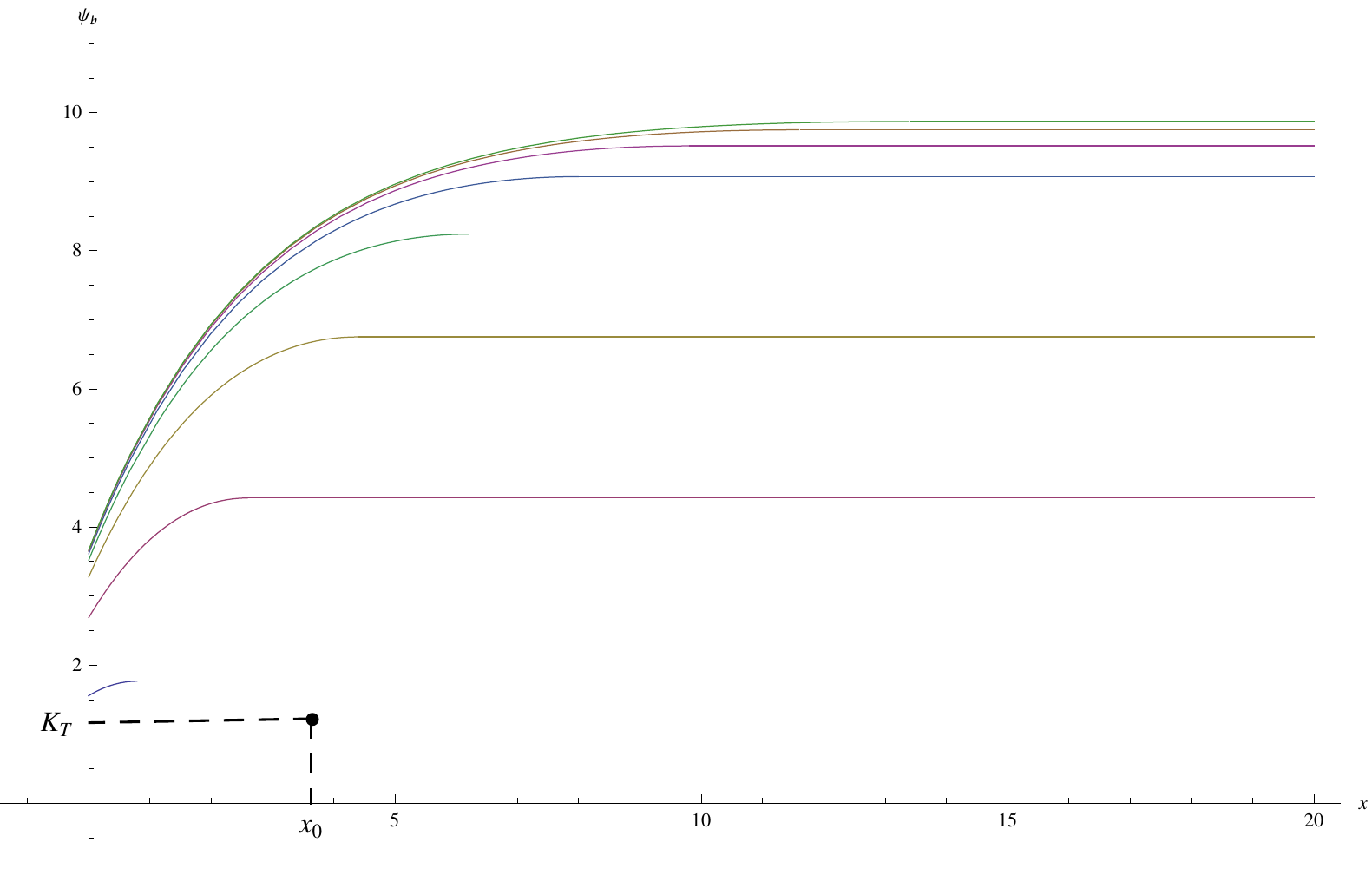}
   \end{subfigure}
\begin{subfigure}[b]{.46\linewidth}
      \includegraphics[width=1\textwidth]{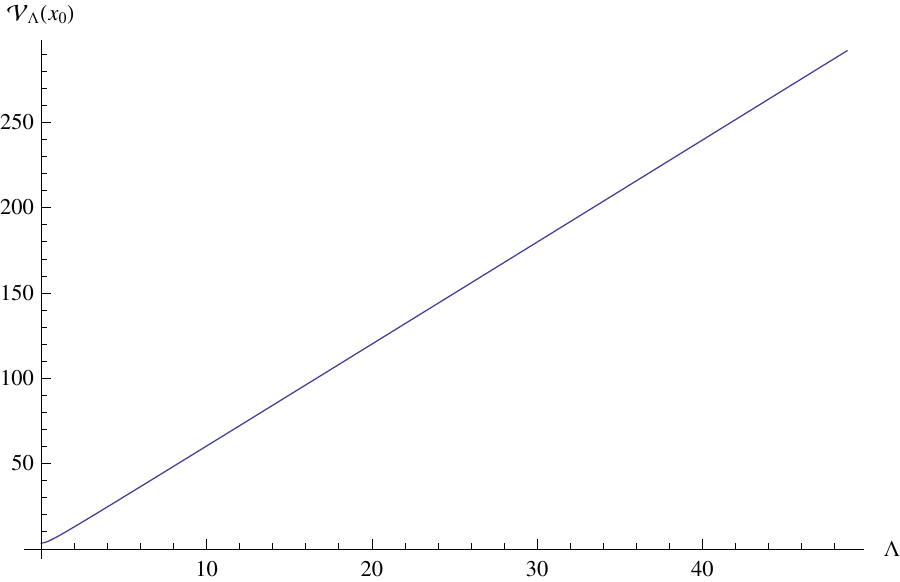}
   \end{subfigure}
      \caption{Inactive constraint.}\label{figinactive}
\end{figure}

In the second case, let $T$ and $x_0$ have values such that $K_T$ lies between $\hat{\varPsi}(x_0)$ and $\varPsi_{b_0}(x_0)$. With such values the plot of $V_\Lambda(x_0)$ for different values of $\Lambda$ reflects the existence of a minimum $\Lambda^*>0$. To find it, find $b^*$ that satisfies $\varPsi_{b^*}(x_0)=K_T$  and use Proposition \ref{blambda1to1} to get $\Lambda^*$, see Figure \ref{figactive}. The optimal solution is $V_{\Lambda^*}(x_0)$ and the optimal barrier is $b^*$.
\begin{figure}[h!]
   \begin{subfigure}[b]{.46\linewidth}
      \includegraphics[width=1\textwidth]{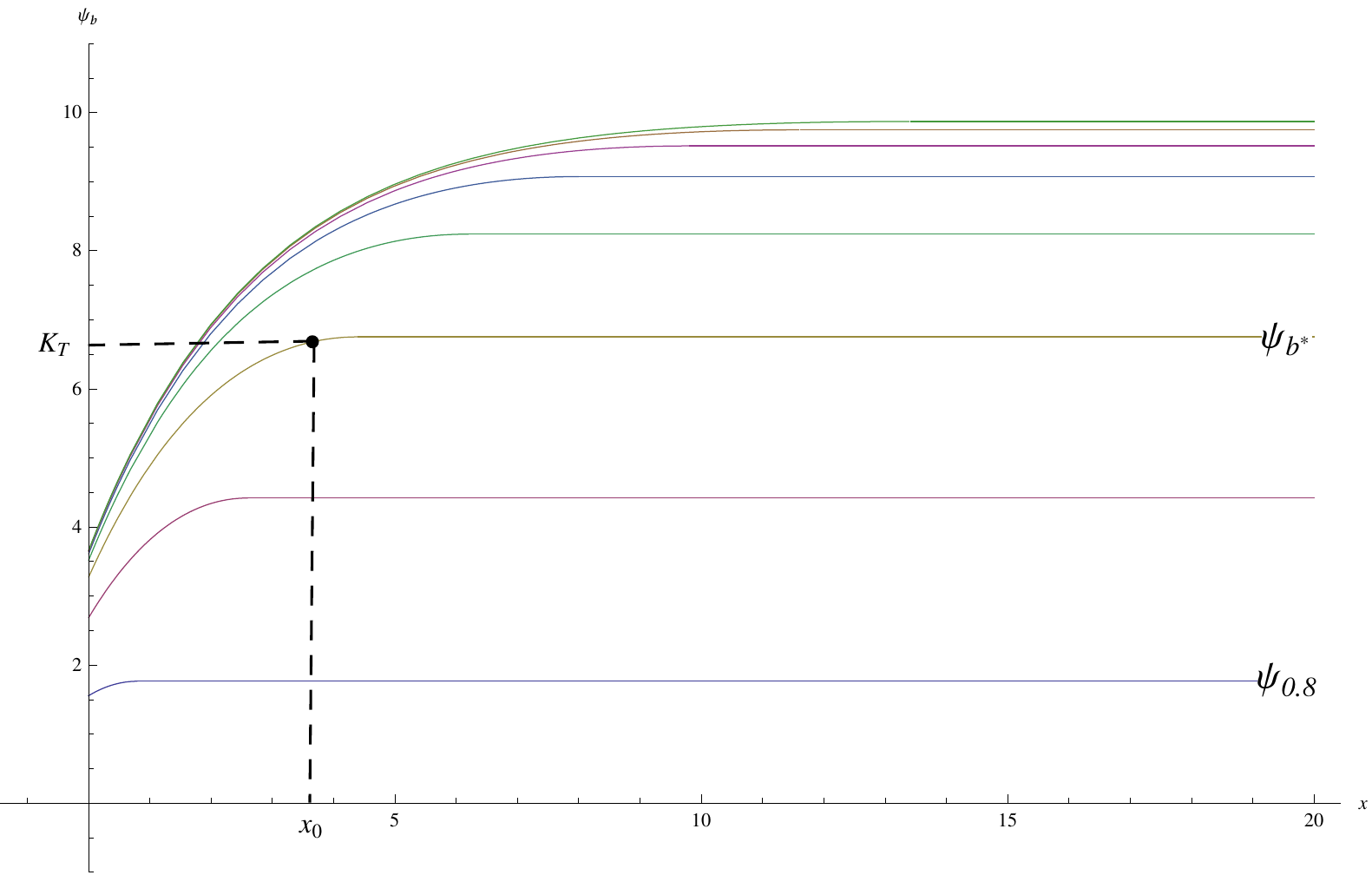}
   \end{subfigure}
\begin{subfigure}[b]{.46\linewidth}
      \includegraphics[width=1\textwidth]{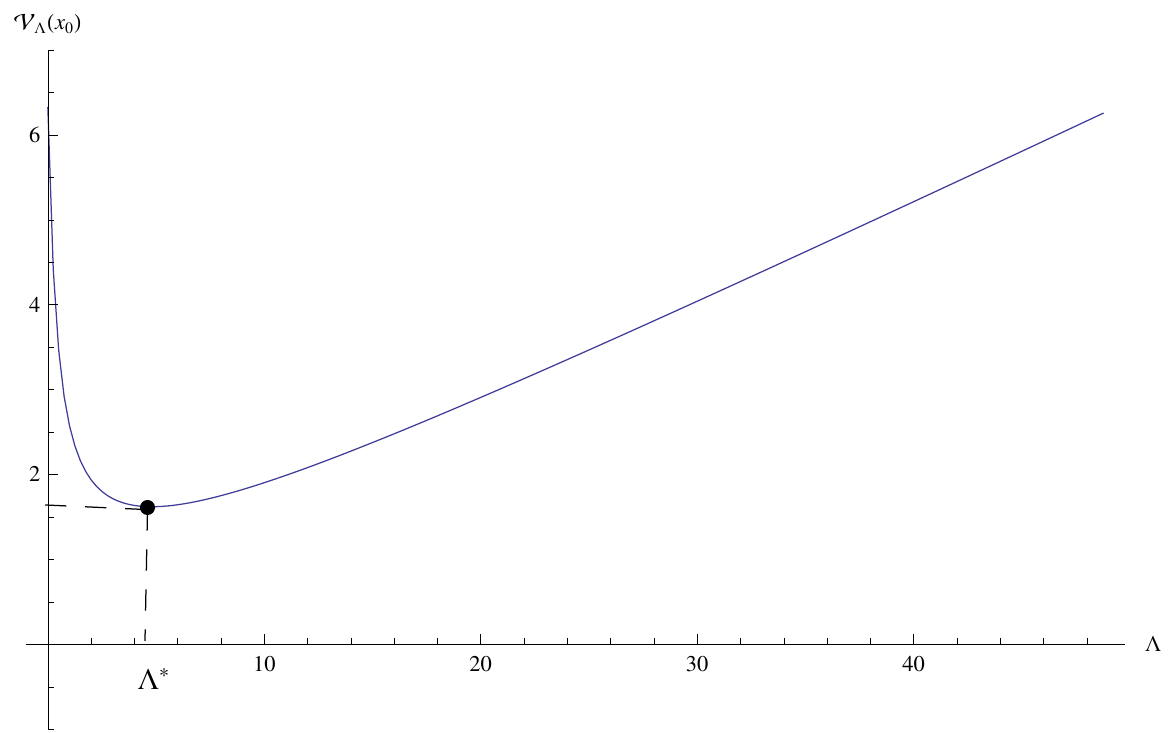}
   \end{subfigure}
      \caption{Active constraint.}\label{figactive}
\end{figure}
\clearpage
Now, let $T$ and $x_0$ have values such that $K_T=\hat{\varPsi}(x_0)$. With such values the plot of $V_\Lambda(x_0)$ for different values of $\Lambda$ shows that the minimum is attained at $\infty$ with a value of 0 see Figure \ref{figfrontera}. In this particular case we conclude that $V(x_0)=0$ so that for \eqref{P1} the optimal strategy is to do nothing. 
\begin{figure}[h!]
   \begin{subfigure}[b]{.49\linewidth}
      \includegraphics[width=1\textwidth]{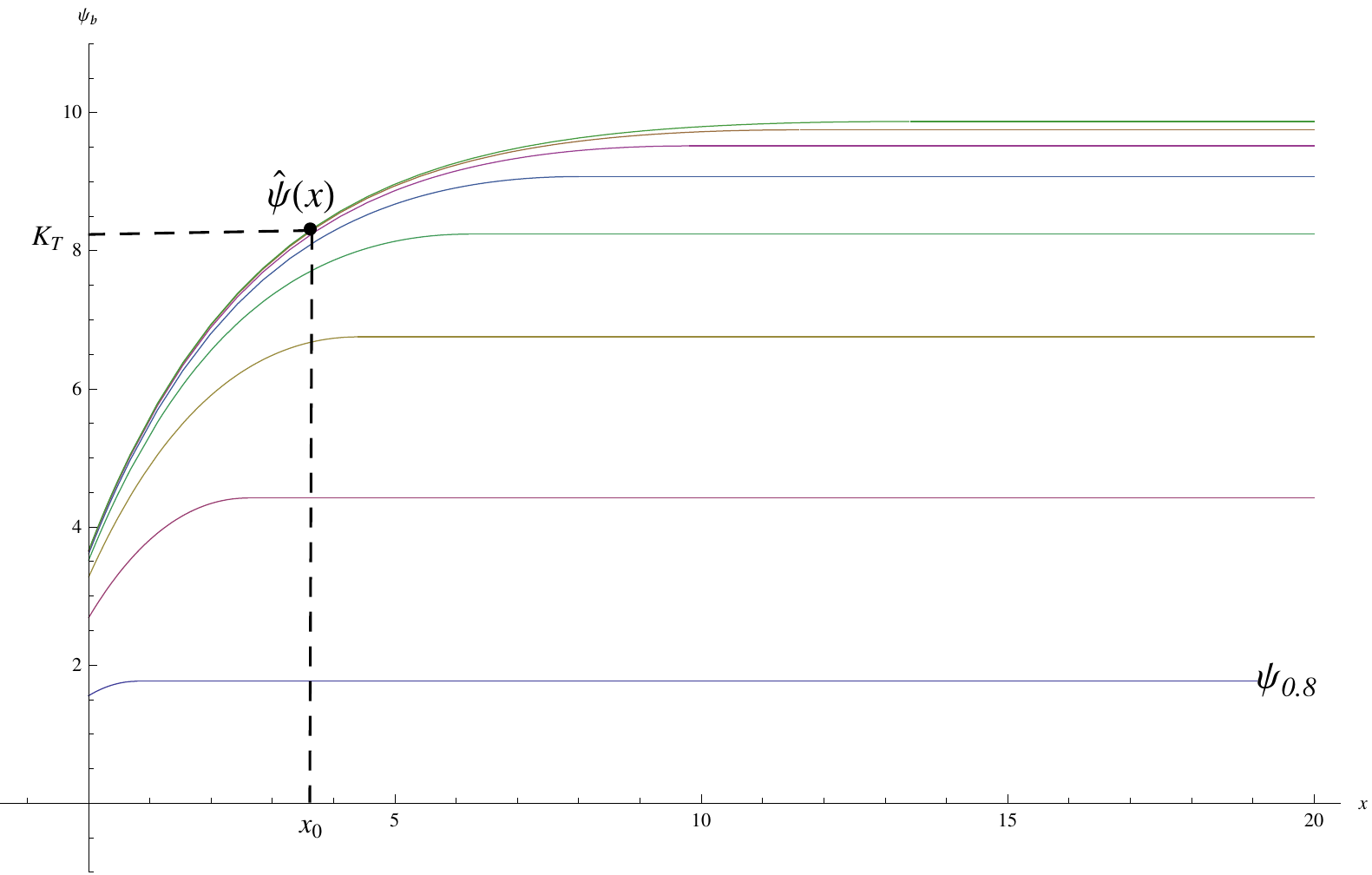}
   \end{subfigure}
\begin{subfigure}[b]{.49\linewidth}
      \includegraphics[width=1\textwidth]{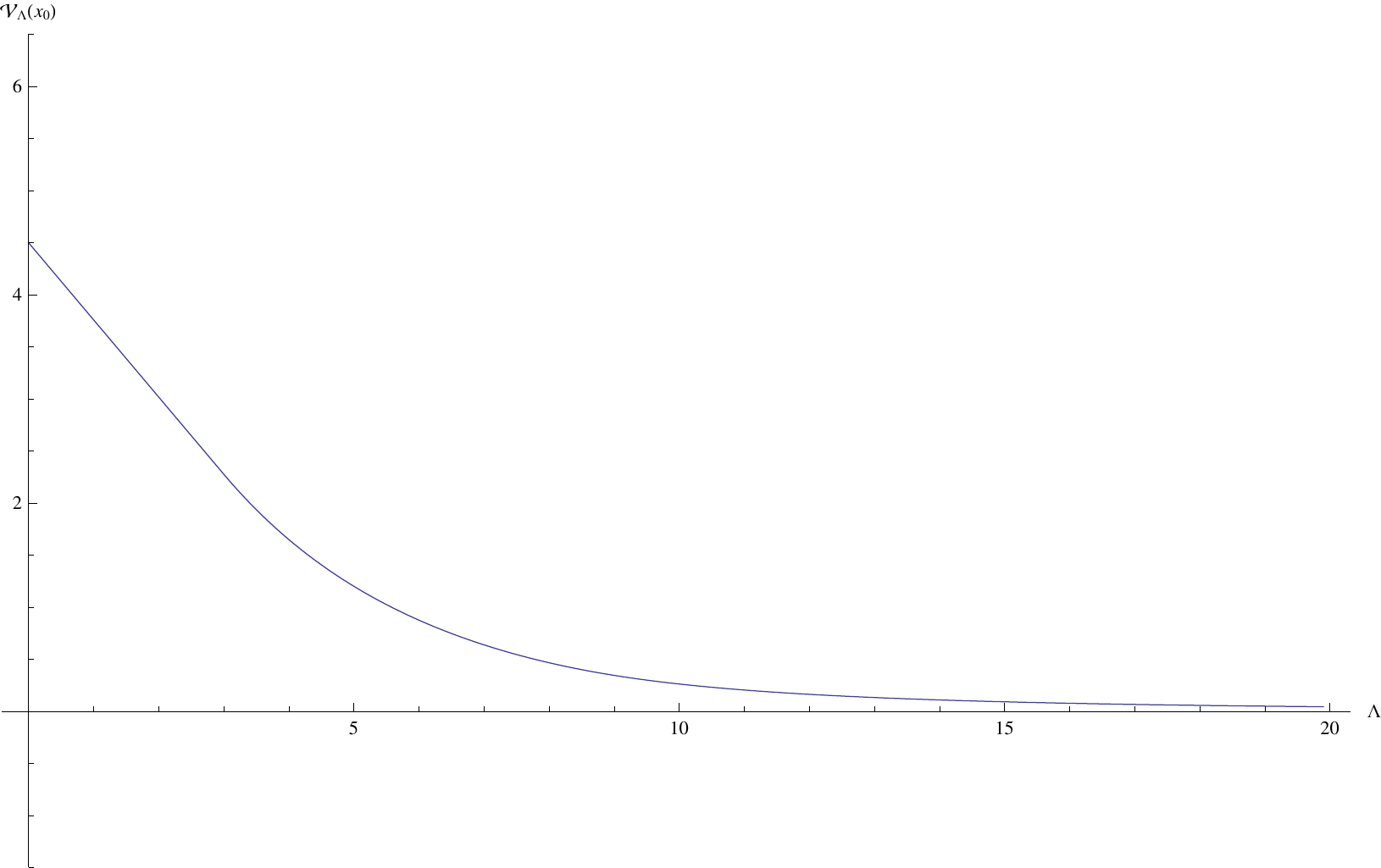}
   \end{subfigure}
      \caption{Do nothing.}\label{figfrontera}
\end{figure}

In the last case, let $T$ and $x_0$ have values such that $K_T$ lies above $\hat{\varPsi}(x_0)$. With such values the plot of $V_\Lambda(x_0)$ for different values of $\Lambda$ reflects that the minimum is also attained at $\infty$. This is due to the fact that there is no $b$ such that $\varPsi_{b}(x_0)=K_T$, see Figure \ref{figunfeasible}. The problem is infeasible so its optimal value is $-\infty$.\\
\begin{figure}[h!]
   \begin{subfigure}[b]{.49\linewidth}
      \includegraphics[width=1\textwidth]{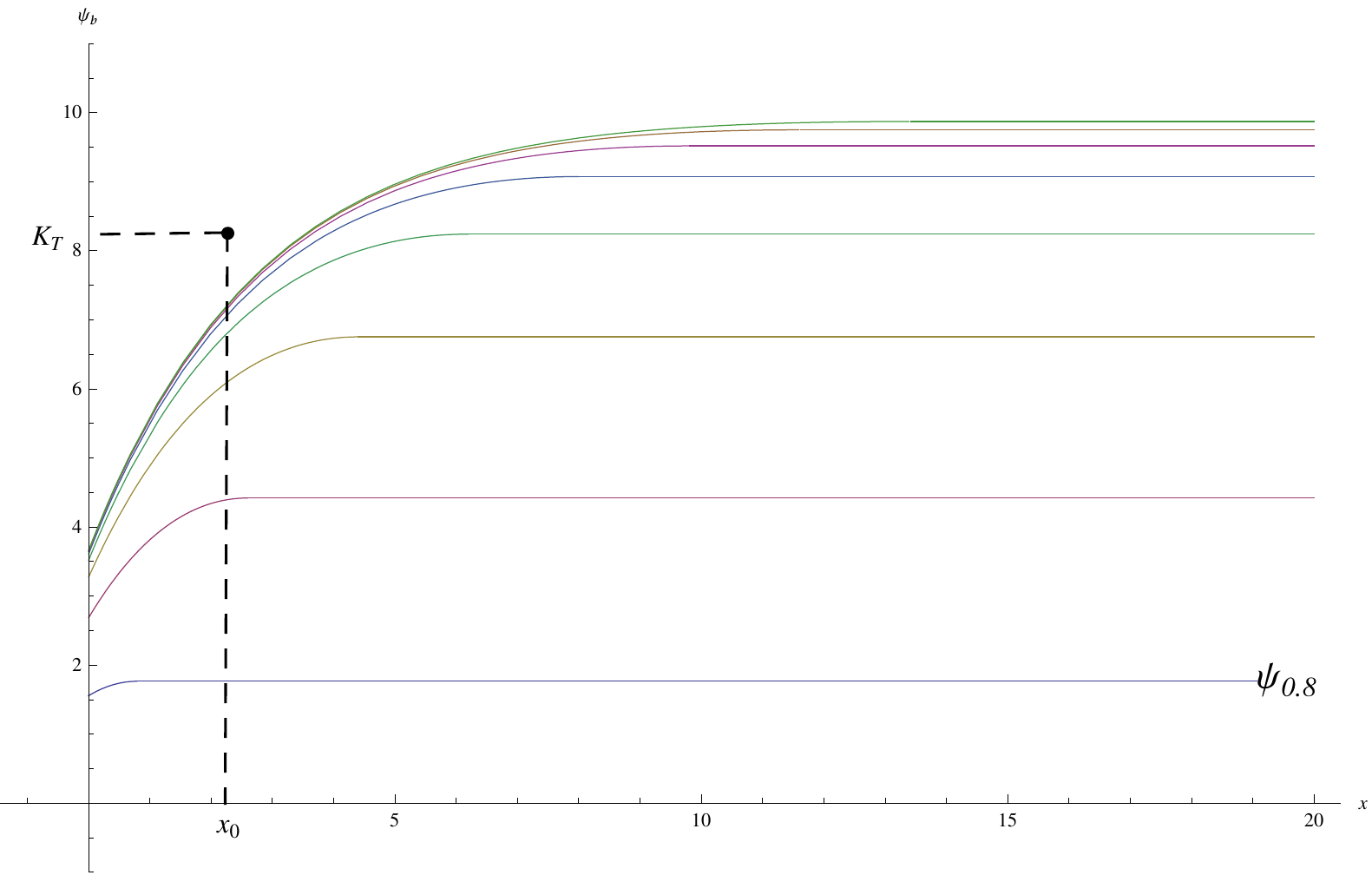}
   \end{subfigure}
\begin{subfigure}[b]{.49\linewidth}
      \includegraphics[width=1\textwidth]{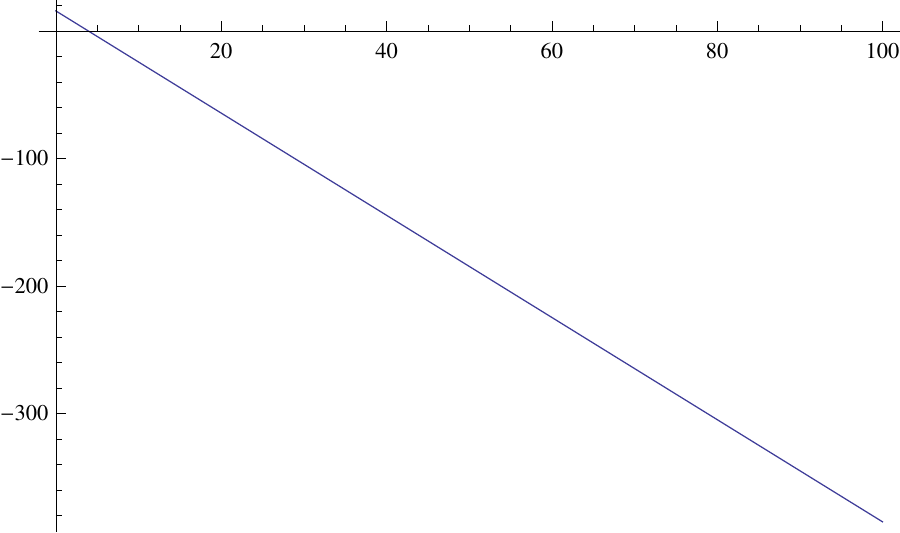}
   \end{subfigure}
      \caption{Problem unfeasible.}\label{figunfeasible}
\end{figure}
\clearpage
Figure \ref{Valfunctions} shows the value functions of both the unconstrained (Solid line) and the constrained problem (Dashed line) for $K_{20}$. In this case for $x<4.23$ the value function of the constrained problem equals $-\infty$.

\begin{figure}[h!]
  \centering
 \includegraphics[scale=.85]{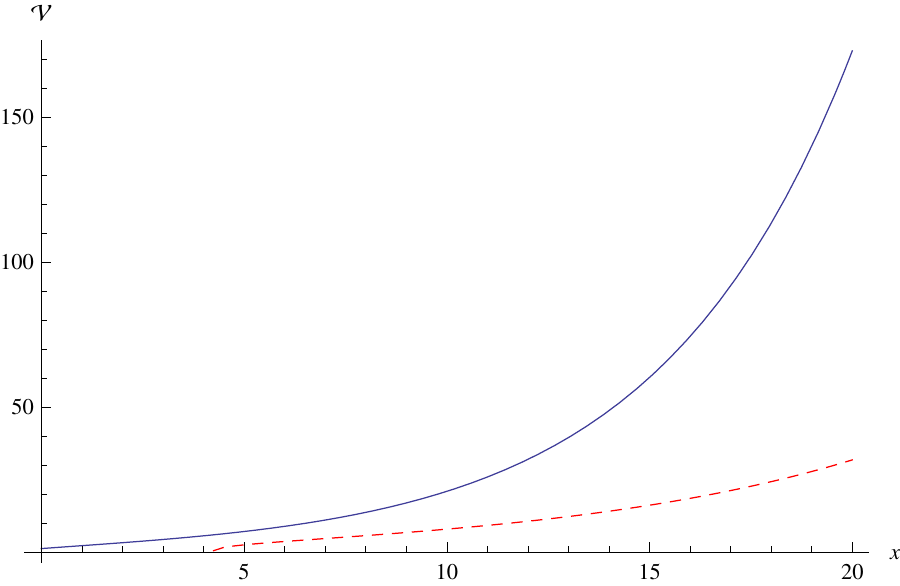}
  \caption{$V(x_0)$ for both the unconstrained and the constrained problem.}\label{Valfunctions}
\end{figure}

Finally, Figure \ref{figregions} shows how the solution to problem \eqref{P1} can be graphically characterized in terms of $(x_0,K_T)$, the horizontal line at level $\frac{1}{\delta}$, $\hat{\varPsi}(x)$ and $\varPsi_{b_0}(x)$.

\begin{figure}[h!]
  \centering
 \includegraphics[scale=.75]{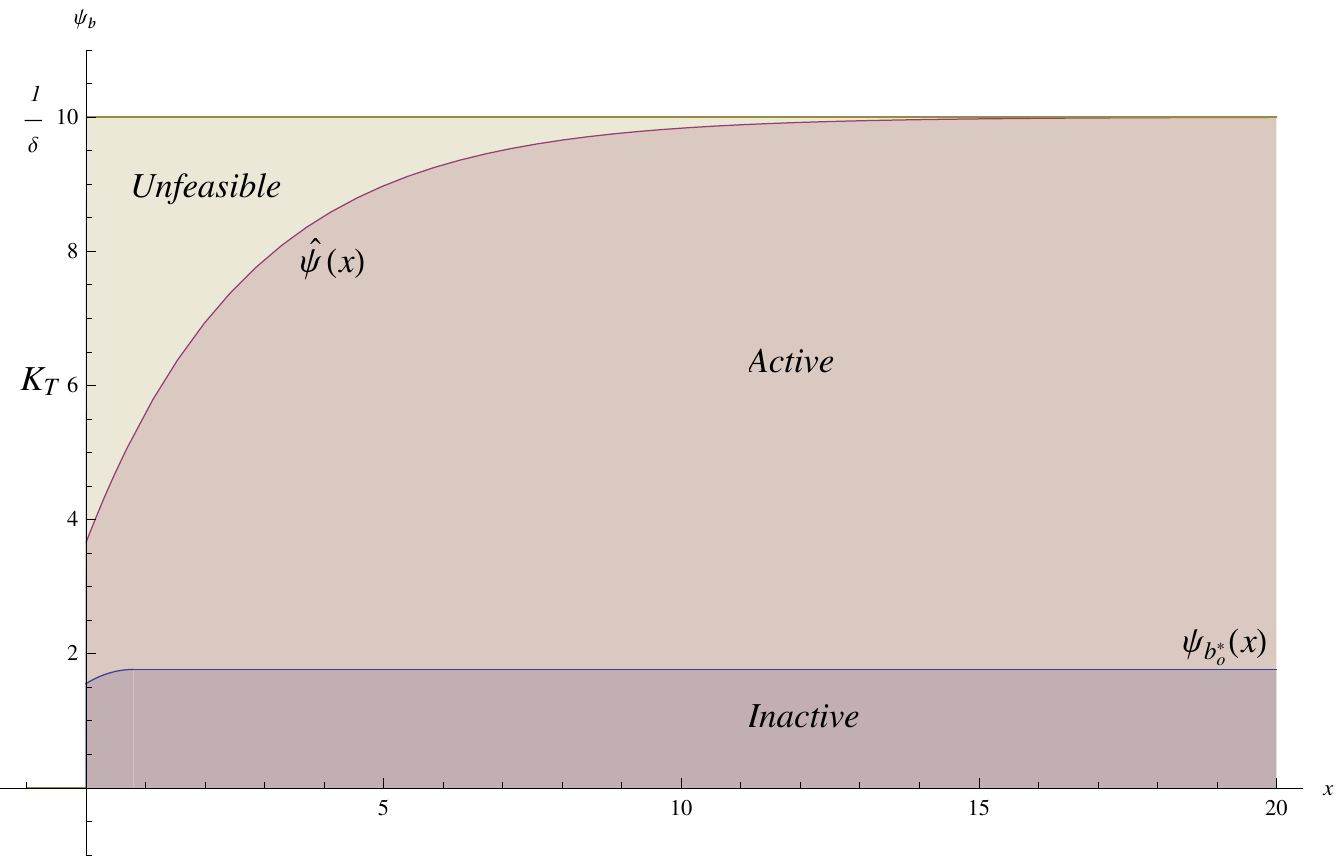}
  \caption{Solution description.}\label{figregions}
\end{figure}
\section{Conclusions and future work}

In the framework of the classical dividend problem there exists a trade-off between stability and profitability. Minimizing the ruin probability could lead to no dividend payment whereas maximizing the expected value of the discounted payments leads to a dividend payment trend for which ruin is certain regardless of the initial amount $x_0$. In this work we study a way to link the profits and the time of ruin derived from a dividend payment strategy $D$. We introduced a restriction that imposes a constraint on the time of ruin. Under exponentially distributed claim sizes distribution we succeed in solving the constrained problem using Duality Theory. Consider the problem with general claims distribution is part of future research. Ongoing research also involves different type of restrictions and time-dependent optimal strategies as well.

\appendix

\section{Auxiliary Lemmas}

\begin{lemma}\label{alphaandroot}
Let $r_2$ be the negative root of the characteristic polynomial (\ref{poli}). Then. $r_2+\alpha>0$.
\end{lemma}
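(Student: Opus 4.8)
The plan is to read off the conclusion from the sign pattern of the quadratic $p$ evaluated at the single point $R=-\alpha$. Recall from the discussion in the proof of Proposition~\ref{blambda1to1} that the discriminant $[\alpha c-(\lambda+\delta)]^2+4c\alpha\delta$ is positive and that $p$ has two real roots of opposite sign; write $r_1>0>r_2$. Since the leading coefficient $c$ is positive, the graph of $p$ is a concave-up parabola, so $p(R)<0$ exactly for $R\in(r_2,r_1)$ while $p(R)>0$ for $R<r_2$ or $R>r_1$.

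The key step is the one-line evaluation
\[
p(-\alpha)=c\alpha^2-\alpha\bigl(\alpha c-(\lambda+\delta)\bigr)-\alpha\delta=\alpha(\lambda+\delta)-\alpha\delta=\alpha\lambda,
\]
which is strictly positive because $\alpha,\lambda>0$. Hence $-\alpha$ lies in the region where $p$ is positive, so either $-\alpha<r_2$ or $-\alpha>r_1$. The second possibility is ruled out immediately since $-\alpha<0<r_1$. Therefore $-\alpha<r_2$, that is, $r_2+\alpha>0$.

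I do not anticipate any genuine obstacle here: the argument collapses to the computation $p(-\alpha)=\alpha\lambda$ together with elementary properties of the graph of a quadratic, and the only point requiring a word of justification is that $-\alpha$ cannot lie to the right of $r_1$, which is clear from $-\alpha<0$. (Alternatively one could argue purely from Vieta's formulas $r_1r_2=-\alpha\delta/c$ and $r_1+r_2=(\lambda+\delta-\alpha c)/c$, but the graphical argument is the shortest.)
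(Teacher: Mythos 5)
Your proof is correct, but it follows a genuinely different route from the one in the paper. The paper starts from the explicit quadratic-formula expression $r_2=\frac{-\alpha c+(\lambda+\delta)-\sqrt{[\alpha c-(\lambda+\delta)]^2+4c\alpha\delta}}{2c}$ and drives the inequality $r_2+\alpha>0$ through a chain of equivalences, isolating the square root, squaring (implicitly using that $c\alpha+(\lambda+\delta)>0$), and reducing everything to $4\alpha c\lambda>0$. You instead never touch the root formula: you compute $p(-\alpha)=c\alpha^2-\alpha(\alpha c-(\lambda+\delta))-\alpha\delta=\alpha\lambda>0$ and invoke the sign pattern of an upward parabola whose roots have opposite sign ($r_1r_2=-\alpha\delta/c<0$), so that $-\alpha$, being a point where $p$ is positive and lying to the left of $r_1$, must lie to the left of $r_2$ as well. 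Both arguments are complete; yours is shorter, avoids the square-root/squaring bookkeeping (where one must remember to check positivity of the side being squared), and the single evaluation $p(-\alpha)=\alpha\lambda$ makes the role of $\lambda>0$ transparent, whereas the paper's computation buys an explicit verification directly tied to the closed-form root that it uses elsewhere. The only ingredients you rely on, namely $c>0$, the reality of the roots, and their opposite signs, are all established in the proof of Proposition~\ref{blambda1to1}, so there is no gap.
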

\begin{proof}
\begin{align*}
r_2+\alpha &>0 \\
&\iff  \frac{-\alpha c + (\lambda +\delta) - \sqrt{([\alpha c - (\lambda + \delta)]^2 +4 c \alpha \delta)}}{2c} + \alpha >0\\
&\iff  2c \alpha > \alpha c - (\lambda +\delta) + \sqrt{([\alpha c - (\lambda + \delta)]^2 +4 c \alpha \delta)} \\
&\iff  c \alpha + (\lambda +\delta) > \sqrt{([\alpha c - (\lambda + \delta)]^2 +4 c \alpha \delta)} \\
&\iff  c \alpha + (\lambda +\delta) > \sqrt{([\alpha c + (\lambda +\delta)- 2(\lambda + \delta)]^2 +4 c \alpha \delta)} \\
&\iff  c \alpha + (\lambda +\delta) >\\
&\qquad \qquad  \sqrt{([\alpha c + (\lambda +\delta)]^2- 4(\alpha c + (\lambda + \delta))(\lambda + \delta) + 4(\lambda + \delta)^2 +4 c \alpha \delta)} \\
&\iff - 4(\alpha c + (\lambda + \delta))(\lambda + \delta) + 4(\lambda + \delta)^2 +4 c \alpha \delta <0\\
&\iff  4(\alpha c + (\lambda + \delta))(\lambda + \delta) > 4(\lambda + \delta)^2 +4 c \alpha \delta\\
&\iff  4\alpha c(\lambda + \delta)   >  4 c \alpha \delta\\
&\iff  4\alpha c\lambda  > 0.
\end{align*}
\end{proof}

\begin{lemma}\label{Phiincreasinb}
Let $\varPsi_b(x)$ be the solution of problem \eqref{A1}. For $x\geq0$ fixed, $\varPsi_b(x)$ is increasing in b.
\end{lemma}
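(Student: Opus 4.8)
The plan is to use the probabilistic representation of $\varPsi_b$ obtained in the proof of Proposition~\ref{optimalpair}, namely that for every $b\ge0$
\[
\varPsi_b(x)=\mathbb{E}_x\!\left[\int_0^{\tau^b}e^{-\delta s}\,ds\right]=\frac{1}{\delta}\Bigl(1-\mathbb{E}_x\bigl[e^{-\delta\tau^b}\bigr]\Bigr),
\]
so that the claim reduces to showing that the ruin time $\tau^b$ under the barrier strategy with level $b$ is stochastically nondecreasing in $b$. The underlying intuition is that raising the barrier lowers the amount paid out as dividends, hence keeps the controlled surplus higher and postpones ruin; I would make this precise through a pathwise coupling.

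Concretely, fix $0\le b_1\le b_2$ and build the two controlled surplus processes $X^{b_1}$ and $X^{b_2}$ on a common probability space, driven by the \emph{same} Poisson process $N$ and the \emph{same} claim sequence $\{Y_i\}$, started from $X^{b_i}_0=x\wedge b_i$; the choice $x\wedge b_i$ simultaneously accounts for the extension $\varPsi_b(x)=\varPsi_b(b)$, $x\ge b$. Between claims each process increases at rate $c$ until it hits its own barrier and is then held there (the premium being paid out as dividends), while at a claim epoch both jump downward by the same amount. I would then show that $X^{b_1}_t\le X^{b_2}_t$ for all $t<\sigma:=\tau^{b_1}\wedge\tau^{b_2}$. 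Writing $\Delta_t:=X^{b_2}_t-X^{b_1}_t$, one has $\Delta_0=x\wedge b_2-x\wedge b_1\ge0$; on $[0,\sigma)$ the map $t\mapsto\Delta_t$ is continuous (the jumps cancel) and absolutely continuous between claims with $\dot\Delta_t=c\bigl(\mathbf{1}\{X^{b_2}_t<b_2\}-\mathbf{1}\{X^{b_1}_t<b_1\}\bigr)$. If $\Delta$ became negative on $[0,\sigma)$ there would be a time $t_0$ with $\Delta_{t_0}=0$ and $\Delta$ negative immediately afterwards; but at such a time $X^{b_1}_{t_0}=X^{b_2}_{t_0}=:y$, and checking the cases $y<b_1$, $y=b_1<b_2$, $y=b_1=b_2$ (using $X^{b_i}_t\le b_i$) shows the right derivative of $\Delta$ at $t_0$ is nonnegative, a contradiction. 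Hence $\Delta\ge0$ on $[0,\sigma)$; since ruin can occur only at a claim epoch and the downward jumps are common to both processes, whichever of the two is ruined at time $\sigma$ forces $X^{b_1}$ to be ruined at $\sigma$ as well, so $\tau^{b_1}=\sigma\le\tau^{b_2}$ almost surely. Consequently $\int_0^{\tau^{b_1}}e^{-\delta s}\,ds\le\int_0^{\tau^{b_2}}e^{-\delta s}\,ds$ pathwise, and taking $\mathbb{E}_x[\cdot]$ yields $\varPsi_{b_1}(x)\le\varPsi_{b_2}(x)$.

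The step requiring the most care is the pathwise comparison $X^{b_1}\le X^{b_2}$: the reflected dynamics are only piecewise smooth, so the ``first time $\Delta<0$'' argument must be justified rigorously (e.g.\ by examining a local minimum of the continuous map $t\mapsto\Delta_t$ sitting at height $0$), and one must check that a common downward jump can never ruin $X^{b_2}$ strictly before it ruins $X^{b_1}$. If one prefers to avoid coupling altogether, there is an analytic alternative: for $x<b_1$ the difference $w:=\varPsi_{b_2}-\varPsi_{b_1}$ solves the homogeneous ODE $cw''+(\alpha c-(\lambda+\delta))w'-\alpha\delta w=0$ with boundary data $cw'(0)=(\lambda+\delta)w(0)$ and $w'(b_1)=\varPsi_{b_2}'(b_1)\ge0$ (the latter because $\varPsi_{b_2}$ is nondecreasing on $[0,b_2]$), and writing $w=Ae^{r_1x}+Be^{r_2x}$ with $r_1>0>r_2$ one checks that these conditions force $w\ge0$ on $[0,b_1]$; for $x\ge b_1$ one then concludes via the extension $\varPsi_{b_1}(x)=\varPsi_{b_1}(b_1)$ together with monotonicity of $\varPsi_{b_2}$ in $x$.
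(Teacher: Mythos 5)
Your argument is correct in substance but follows a genuinely different route from the paper. The paper proves Lemma~\ref{Phiincreasinb} by brute force: it differentiates the explicit formula $\varPsi_b(x)=\frac{1}{\delta}+C_1(b)e^{r_1x}+C_2(b)e^{r_2x}$ with respect to $b$ and reduces the numerator of $\frac{d\varPsi_b(x)}{db}$ to the single term $-\alpha\delta r_1r_2(\alpha+r_1)(\alpha+r_2)e^{r_2(b+x)}e^{r_1b}[r_1-r_2]^2>0$, handling $x\geq b$ separately via $\varPsi_b(x)=\varPsi_b(b)$. You instead use the probabilistic representation $\varPsi_b(x)=\mathbb{E}_x[\int_0^{\tau^b}e^{-\delta s}ds]$ (which the paper does establish in the proof of Proposition~\ref{optimalpair}) and a pathwise coupling showing $\tau^{b_1}\leq\tau^{b_2}$ a.s.\ when $b_1\leq b_2$; your comparison argument for $\Delta_t=X^{b_2}_t-X^{b_1}_t$ and the observation that a common downward jump ruining $X^{b_2}$ must also ruin $X^{b_1}$ are sound, and starting from $x\wedge b_i$ correctly accounts for the lump-sum payment when $x>b$. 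What each approach buys: yours is distribution-free (it never uses the exponential-claims formula for $\varPsi_b$, so it would survive a generalization of the claim law), while the paper's computation delivers an explicit, strictly positive derivative $\frac{d\varPsi_b(x)}{db}$, which is reused verbatim in the proof of Lemma~\ref{limit}, where it appears in a denominator.

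One caveat you should address: as written, your coupling only yields the weak inequality $\varPsi_{b_1}(x)\leq\varPsi_{b_2}(x)$, whereas the paper obtains strict monotonicity, and strictness is what justifies the \emph{uniqueness} of the barrier $b^*$ with $\varPsi_{b^*}(x_0)=K_T$ in the proof of Proposition~\ref{optimalpair}. This is easy to repair within your framework: for $b_1<b_2$, on the positive-probability event that no claim arrives long enough for $X^{b_1}$ to reach and sit at $b_1$ while $X^{b_2}$ climbs strictly above it, a subsequent claim of suitable size (the exponential law has full support) ruins $X^{b_1}$ but not $X^{b_2}$, so $\tau^{b_1}<\tau^{b_2}$ with positive probability and the expectations differ strictly. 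Your analytic alternative in the last paragraph would also need this kind of completion (the boundary conditions on $w$ are stated but the sign argument for $A,B$ is not carried out), so I would keep the coupling as the main proof and add the strictness step.
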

\begin{proof}
The proof consists on calculate $ \frac{d\varPsi_b(x)}{db}$. For $x<b$
\begin{align*}
g(x)&:=\frac{d\varPsi_b(x)}{db}\\
&=\frac{(\alpha+r_1)(\alpha+r_2)r_2^2 e^{r_2b}\alpha\delta[r_1e^{r_1b}(\alpha+r_1)-r_2 e^{r_2b}(\alpha+r_2)]}{(\alpha\delta[r_1e^{r_1b}(\alpha+r_1)-r_2 e^{r_2b}(\alpha+r_2)])^2}e^{r_1x}\\
&-\frac{(\alpha+r_1)(\alpha+r_2)r_2 e^{r_2b}\alpha\delta[r_1^2e^{r_1b}(\alpha+r_1)-r_2^2 e^{r_2b}(\alpha+r_2)] }{(\alpha\delta[r_1e^{r_1b}(\alpha+r_1)-r_2 e^{r_2b}(\alpha+r_2)])^2}e^{r_1x}\\
&- \frac{(\alpha+r_2)^2 r_2^2 e^{r_2b}\alpha\delta[r_1e^{r_1b}(\alpha+r_1)-r_2 e^{r_2b}(\alpha+r_2)]}{(\alpha\delta[r_1e^{r_1b}(\alpha+r_1)-r_2 e^{r_2b}(\alpha+r_2)])^2}e^{r_2x}\\
&+\frac{(\alpha+r_2)^2r_2 e^{r_2b}\alpha\delta[r_1^2e^{r_1b}(\alpha+r_1)-r_2^2 e^{r_2b}(\alpha+r_2)] }{(\alpha\delta[r_1e^{r_1b}(\alpha+r_1)-r_2 e^{r_2b}(\alpha+r_2)])^2}e^{r_2x}.
\end{align*}
The numerator of this expression can be reduced to
\begin{align*}
&\alpha\delta r_2(\alpha+r_1)(\alpha+r_2) e^{r_2(b+x)}[r_1r_2e^{r_1b}(\alpha+r_1)\\
&\quad -r_2^2 e^{r_2b}(\alpha+r_2)-r_1^2e^{r_1b}(\alpha + r_1)+r_2^2e^{r_2b}(\alpha+r_2)]\\
&\qquad-\alpha\delta r_2(\alpha+r_2)^2 e^{r_2(b+x)}[r_1r_2e^{r_1b}(\alpha+r_1)\\
&\qquad \quad-r_2^2 e^{r_2b}(\alpha+r_2)-r_1^2e^{r_1b}(\alpha + r_1)+r_2^2e^{r_2b}(\alpha+r_2)]\\
&=\alpha\delta r_2r_1(\alpha+r_1)(\alpha+r_2)^2 e^{r_2(b+x)}e^{r_1b}[r_1-r_2]\\
&\qquad -\alpha\delta r_2r_1(\alpha+r_1)^2(\alpha+r_2) e^{r_2(b+x)}e^{r_1b}[r_1-r_2]\\
&=-\alpha\delta r_2r_1(\alpha+r_1)(\alpha+r_2) e^{r_2(b+x)}e^{r_1b}[r_1-r_2]^2>0.
\end{align*}
Now, for $x\geq b$, $\varPsi_b(x)=\varPsi_b(b)$ and therefore we must calculate
\begin{align*}
\frac{d\varPsi_b(b)}{db}=&g(b)+ \frac{(\alpha+r_1)(\alpha+r_2)r_2 e^{r_2b}r_1 e^{r_1b}}{\alpha\delta[r_1e^{r_1b}(\alpha+r_1)-r_2 e^{r_2b}(\alpha+r_2)]}\\
-&\frac{(\alpha+r_2)^2 r_2 e^{r_2b}r_2 e^{r_2b}}{\alpha\delta(e^{r_1b}r_1(\alpha+r_1)-e^{r_2b}r_2(\alpha+r_2))}- \frac{(\alpha+r_2)r_2 e^{r_2b}}{\alpha\delta}\\
=&g(b)+ \frac{(\alpha+r_1)(\alpha+r_2)r_1r_2 e^{(r_1+r_2)b}}{\alpha\delta[r_1e^{r_1b}(\alpha+r_1)-r_2 e^{r_2b}(\alpha+r_2)]}\\
-&\frac{(\alpha+r_2)^2 r_2^2 e^{2r_2b}-(\alpha+r_1)(\alpha+r_2)r_1r_2 e^{(r_1+r_2)b}+(\alpha+r_2)^2 r_2^2 e^{2r_2b}}{\alpha\delta(e^{r_1b}r_1(\alpha+r_1)-e^{r_2b}r_2(\alpha+r_2))}\\
=&g(b)>0.
\end{align*}
\end{proof}

\begin{lemma}\label{limit}
Let $x\geq0$ and $T\geq0$. If $K_T=\hat{\varPsi}(x)$ then $\Lambda\Big[\mathbb{E}_x[\int_0^{\tau^{b_\Lambda}}e^{-\delta s}ds] -K_T\Big]\rightarrow 0$ as $\Lambda\rightarrow \infty$.
\begin{proof}
From the proof of Proposition \ref{optimalpair} we must calculate $\Lambda\Big[\varPsi_{b_\Lambda}(x) -K_T\Big]$ as $\Lambda\rightarrow \infty$. Furthermore, from Proposition \ref{blambda1to1} and Lemma \ref{Phiincreasinb} we know the second term goes to $0$ as $\Lambda\rightarrow \infty$. Since $\Lambda\rightarrow \infty$ is equivalent to $b \rightarrow \infty$, we will calculate the limit of 
$$-\frac{[\varPsi_b(x)-K_T]^2}{\frac{d\varPsi_b(x)}{d\Lambda}}=-\frac{[\varPsi_b(x)-K_T]^2}{\frac{d\varPsi_b(x)}{db}\frac{db}{d\Lambda}} \text{ as }  b\rightarrow \infty.$$
Recall that
\begin{equation*}
\varPsi_b(x)= \frac{1}{\delta} + \frac{(\alpha+r_1)(\alpha+r_2)r_2 e^{r_2b+r_1x}-(\alpha+r_2)(\alpha+r_1)r_1 e^{r_1b+r_2x}}{\alpha\delta[r_1e^{r_1b}(\alpha+r_1)-r_2 e^{r_2b}(\alpha+r_2)]}
\end{equation*}
and that $K_T=\hat{\varPsi}(x)=\frac{1}{\delta}-\frac{\alpha+r_2}{\alpha \delta}e^{r_2 x}$. Therefore
$$(\varPsi_b(x)-K_T)^2=\left(\frac{(\alpha+r_2)r_2[e^{r_1 x}(\alpha+r_1)-e^{r_2 x}(\alpha+r_2)]}{(\alpha\delta[r_1e^{r_1b}(\alpha+r_1)-r_2 e^{r_2b}(\alpha+r_2)]}e^{r_2 b}\right)^2.$$
From Proposition \ref{blambda1to1} and Lemma \ref{Phiincreasinb} it can be derived that
\begin{align*}
\frac{d\varPsi_b(x)}{db}\frac{db}{d\Lambda}=&\frac{\delta \alpha (\alpha+r_1)^2(\alpha+r_2)^2[r_1-r_2]^3 e^{r_1b}e^{r_2(b+x)}}{(\alpha\delta[r_1e^{r_1b}(\alpha+r_1)-r_2 e^{r_2b}(\alpha+r_2)])^2}\\
& \qquad \cdot \frac{1}{e^{-r_2b}(r_1(\lambda+\delta)+ \alpha \delta)- e^{-r_1b}(r_2(\lambda+\delta)+\alpha\delta )}.
\end{align*}
Now,
\begin{align*}
-\frac{(\varPsi_b(x)-K_T)^2}{\frac{d\varPsi_b(x)}{db}\frac{db}{d\Lambda}}=
&-\frac{\left((\alpha+r_2)r_2[e^{r_1 x}(\alpha+r_1)-e^{r_2 x}(\alpha+r_2)]\right)^2e^{2r_2b}}{\alpha \delta (r_1-r_2)^3(\alpha+r_1)^2(\alpha+r_2)^2 e^{r_1b}e^{r_2b} e^{r_2x}}\\
&\cdot (e^{-r_2b}(r_1(\lambda+\delta)+ \alpha \delta)- e^{-r_1b}(r_2(\lambda+\delta)+\alpha\delta ))\\
=&-\frac{\left(\alpha+r_2)r_2[e^{r_1 x}(\alpha+r_1)-e^{r_2 x}(\alpha+r_2)]\right)^2}{\alpha \delta (r_1-r_2)^3(\alpha+r_1)^2(\alpha+r_2)^2 e^{r_2x}}\\
& \cdot (e^{-r_1b}(r_1(\lambda+\delta)+ \alpha \delta)- e^{(r_2-2r_1)b}(r_2(\lambda+\delta)+\alpha\delta )),
\end{align*}
from where one can conclude that the last expression goes to $0$ as $b\rightarrow \infty$ since $r_2<0<r_1$.
\end{proof}
\end{lemma}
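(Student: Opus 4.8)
The plan is to recast the stated product as an ordinary $b$-limit and then read off its order from the explicit exponential building blocks. First I would use Proposition~\ref{optimalpair}, which identifies $\mathbb{E}_x[\int_0^{\tau^{b}}e^{-\delta s}\,ds]=\varPsi_b(x)$, so that for the optimal barrier $b_\Lambda$ the quantity in question is exactly $\Lambda\,[\varPsi_{b_\Lambda}(x)-K_T]$. By Proposition~\ref{blambda1to1} the map $b\mapsto\Lambda(b)$ defined through \eqref{barrera} is a strictly increasing bijection with $\Lambda\to\infty$ iff $b_\Lambda\to\infty$, so I may change variables and study $\Lambda(b)\,[\varPsi_b(x)-K_T]$ as $b\to\infty$. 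Lemma~\ref{Phiincreasinb} shows $\varPsi_b(x)$ increases to its limit $\hat{\varPsi}(x)=K_T$, so $\varPsi_b(x)-K_T\uparrow 0^-$; together with $\Lambda(b)\to\infty$ this is an $\infty\cdot 0$ indeterminate form that must be resolved.

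The core of the argument is a direct asymptotic computation. I would substitute the closed forms
\[
\varPsi_b(x)-K_T=\frac{(\alpha+r_2)r_2\big[e^{r_1 x}(\alpha+r_1)-e^{r_2 x}(\alpha+r_2)\big]}{\alpha\delta\big[r_1e^{r_1b}(\alpha+r_1)-r_2 e^{r_2b}(\alpha+r_2)\big]}\,e^{r_2 b}
\]
and the expression for $\Lambda(b)$ from \eqref{barrera}, and then isolate the dominant exponentials as $b\to\infty$ using the sign facts $r_2<0<r_1$ established in the proof of Proposition~\ref{blambda1to1} (with $\alpha+r_2>0$ from Lemma~\ref{alphaandroot} guaranteeing the bracketed factors and the denominators neither vanish nor change sign). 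Since $r_2<0$, the numerator of $\Lambda(b)$ is dominated by the term $-r_1 e^{-r_2 b}(r_1(\lambda+\delta)+\alpha\delta)$, while the denominator of $\varPsi_b(x)-K_T$ is dominated by $\alpha\delta\,r_1(\alpha+r_1)e^{r_1 b}$. The key cancellation is that the growing factor $e^{-r_2 b}$ coming from $\Lambda(b)$ exactly annihilates the decaying factor $e^{r_2 b}$ in $\varPsi_b(x)-K_T$, leaving a bounded constant over $e^{r_1 b}$; hence the product is of order $e^{-r_1 b}$ and tends to $0$ because $r_1>0$.

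An equivalent route, which I expect matches the intended proof, is to keep $\Lambda$ as the variable and rewrite the indeterminate form as $\Lambda\,[\varPsi_{b_\Lambda}(x)-K_T]=\dfrac{\varPsi_{b_\Lambda}(x)-K_T}{1/\Lambda}$, a $0/0$ quotient, and apply l'H\^opital in $b$ after using $\frac{d\varPsi_b}{d\Lambda}=(d\varPsi_b/db)/(d\Lambda/db)$; both derivatives are available in closed form from Lemma~\ref{Phiincreasinb} and from \eqref{dbdl}. This reduces the limit to an expression of the same exponential type, again controlled by $e^{-r_1 b}\to 0$.

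The main obstacle is purely computational bookkeeping: simplifying the product (or the l'H\^opital quotient) to the point where the leading exponentials are visible, and checking at each stage that the denominators $r_1 e^{r_1 b}(\alpha+r_1)-r_2 e^{r_2 b}(\alpha+r_2)$ and $(r_2-r_1)(\alpha+r_1)(\alpha+r_2)$ stay bounded away from $0$. This is exactly where the root signs $r_2<0<r_1$ and the inequality $\alpha+r_2>0$ of Lemma~\ref{alphaandroot} do the real work, turning every surviving exponent into a negative multiple of $b$.
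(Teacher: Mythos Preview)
Your proposal is correct, and your \emph{primary} route---multiplying the closed form for $\Lambda(b)$ from \eqref{barrera} directly against the closed form for $\varPsi_b(x)-K_T$ and reading off the dominant exponentials---is in fact more elementary than what the paper does. The paper instead passes through a l'H\^opital-type reduction: it rewrites $\Lambda[\varPsi_b(x)-K_T]$ as $\Lambda/\big(1/(\varPsi_b(x)-K_T)\big)$, applies l'H\^opital in $\Lambda$ to obtain $-[\varPsi_b(x)-K_T]^2\big/\frac{d\varPsi_b(x)}{d\Lambda}$, and then expands $\frac{d\varPsi_b(x)}{d\Lambda}=\frac{d\varPsi_b(x)}{db}\cdot\frac{db}{d\Lambda}$ using the derivative formulas from Lemma~\ref{Phiincreasinb} and \eqref{dbdl}. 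Only after this detour does it arrive at an expression whose decay is governed by $e^{-r_1 b}$. Your direct multiplication bypasses both derivative computations and reaches the same $e^{-r_1 b}$ control in one step, which is cleaner. One small remark: the ``equivalent route'' you sketch, l'H\^opital on $\dfrac{\varPsi_{b_\Lambda}(x)-K_T}{1/\Lambda}$, would produce $-\Lambda^2\,\frac{d\varPsi_b}{d\Lambda}$ rather than the paper's $-\dfrac{[\varPsi_b(x)-K_T]^2}{d\varPsi_b/d\Lambda}$; the paper's specific form comes from applying l'H\^opital to the reciprocal arrangement $\Lambda/\big(1/f\big)$ instead. Either variant works, but neither is needed once you have your direct argument.
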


\bibliographystyle{abbrv}
\bibliography{ref}

\end{document}